\documentclass[11pt,reqno]{amsart}

\usepackage[T1]{fontenc}
\usepackage[utf8]{inputenc}
\usepackage[english]{babel}
\usepackage{lmodern}
\usepackage{amsmath,amssymb,amsthm,mathtools,mathrsfs}
\usepackage{microtype}
\usepackage{enumitem}
\usepackage{xcolor}
\usepackage[colorlinks=true,linkcolor=blue!50!black,citecolor=blue!50!black,urlcolor=blue!50!black]{hyperref}
\hypersetup{
  pdftitle={Smooth Diffeomorphisms and Mahler's Problem on Liouville Numbers},
  pdfauthor={Diego Marques}
}

\numberwithin{equation}{section}
\newtheorem{theorem}{Theorem}[section]
\newtheorem{proposition}[theorem]{Proposition}
\newtheorem{lemma}[theorem]{Lemma}
\newtheorem{corollary}[theorem]{Corollary}
\theoremstyle{definition}
\newtheorem{definition}[theorem]{Definition}
\theoremstyle{remark}
\newtheorem{remark}[theorem]{Remark}

\newcommand{\R}{\mathbb R}
\newcommand{\Q}{\mathbb Q}
\newcommand{\Z}{\mathbb Z}
\newcommand{\C}{\mathbb C}
\newcommand{\N}{\mathbb N}
\newcommand{\Ralg}{\overline{\mathbb Q}_{\mathbb R}}
\newcommand{\Lio}{\mathscr L}
\newcommand{\id}{\operatorname{id}}

\newcommand{\NA}{\operatorname{NA}}

\newcommand{\norm}[1]{\left\lVert #1\right\rVert}
\newcommand{\abs}[1]{\left|#1\right|}
\newcommand{\set}[1]{\left\{#1\right\}}

\title[Smooth Diffeomorphisms and Mahler's Problem]
{Smooth Diffeomorphisms and Mahler's Problem on Liouville Numbers}

\author{Diego Marques}
\address{Departamento de Matem\'atica, Universidade de Bras\'ilia, Bras\'ilia, 70910-900, Brazil}
\email{diego@mat.unb.br}

\subjclass[2020]{Primary 11J81; Secondary 26A18, 26A30}
\keywords{Liouville numbers, Maillet's property, smooth diffeomorphisms, arithmetic interpolation, rational germs, non-analyticity sets}

\begin{document}

\begin{abstract}
A classical theorem of Maillet asserts that every nonconstant rational
function over $\Q$ maps Liouville numbers to Liouville numbers. In 1984,
Mahler asked whether a transcendental entire function can have the same
property. We prove a strong smooth counterpart: writing $\Lio$ for the set
of Liouville numbers, there exist orientation-preserving $C^\infty$
diffeomorphisms $f:\R\to\R$, arbitrarily close to the identity and
transcendental over $\R(x)$, such that for every real number field
$K\subset\R$, every $n\geq1$, and every $m\geq0$,
\[
D^m(f^{\circ n})(K)\subseteq K,
\qquad
D^m(f^{\circ n})(\Lio)\subseteq\Lio.
\]
In fact, the non-analyticity locus may be prescribed as any nonempty compact perfect nowhere-dense set disjoint from the real algebraic and Liouville numbers. The proof combines Maillet's theorem with an arithmetic refinement
of K\"orner's smooth polynomial sewing method and a rational-germ construction.
\end{abstract}

\maketitle

\section{Introduction}

In 1984, Mahler~\cite{Mahler1984} asked whether there exists a transcendental
entire function that maps every Liouville number to a Liouville number. The
question is motivated by a classical theorem of Maillet~\cite{Maillet1906},
which shows that every nonconstant rational function $R\in\mathbb Q(x)$ has
this property. Mahler's problem therefore asks whether this arithmetic
stability, automatic in the rational setting, can persist under genuine
transcendence in the strongest analytic category.

Following recent terminology, we say that a real function has
\emph{Maillet's property} if it maps every Liouville number in its domain to
a Liouville number, and we denote the set of Liouville numbers by $\Lio$.
Thus Mahler's problem is precisely the question of whether Maillet's property
can be realized by a transcendental entire function. It remains open; see
Schleischitz~\cite{Schleischitz2025} for a recent formulation and further
context.

Mahler's question belongs to the broader tradition of arithmetic
interpolation. Classical constructions of transcendental analytic functions
with prescribed arithmetic values go back to
Weierstrass~\cite{Weierstrass1923} and
St\"ackel~\cite{Stackel1895,Stackel1902}. A particularly striking theorem of
van der Poorten~\cite{vanderPoorten1968} produces transcendental entire
functions whose values, together with those of all their derivatives,
preserve every number field. Mahler's problem is different in nature: the
set $\Lio$ is defined by an exceptional Diophantine approximation property,
rather than by algebraic constraints on prescribed values. In this direction,
Marques and Moreira~\cite{MarquesMoreira2015} constructed transcendental
entire functions preserving a large distinguished subclass of the Liouville
numbers, while Marques and Schleischitz~\cite{MarquesSchleischitz2016}
obtained simultaneous preservation results for broad parametrized classes.
Thus the central difficulty is not merely to prescribe many arithmetic
values of a transcendental function, but to preserve the entire Diophantine
class $\Lio$.

\subsection*{The regularity threshold and a recent finite-order construction}

The present paper addresses the regularity threshold behind Mahler's
problem. Recent work of Lelis, Moreira, and Silva
\cite[Theorem~1.2 and Corollary~1.3]{LMS2026} shows that Maillet's
phenomenon persists at every finite level of smoothness. More precisely,
for every integer $k\geq1$ and every real number $t>2k$, their construction
produces an uncountable dense family of transcendental functions
$f\in C^k(\R)$ satisfying
\[
f(\Q)\subseteq\Q,
\qquad
f(\Lio)\subseteq\Lio,
\]
together with polynomial denominator control. Their work is
used here only to place the present result in the context of the known
finite-order theory; none of the results or proofs below depend on it.

The dependence of the admissible exponent on $k$ makes explicit the
finite-order nature of that construction. In particular, the results for
successive values of $k$ do not by themselves provide a compatible passage
to a single $C^\infty$ function. This leads naturally to the question of
whether infinite smoothness is compatible with Maillet's property by a
different mechanism. The construction developed here gives an affirmative
answer: rather than seeking denominator estimates uniform in the order of
differentiation, it separates the arithmetic locus from the
non-analyticity locus and replaces global height control by exact local
rationality.

\subsection*{Main result}

Our main theorem shows that the derivative loss inherent in the
finite-order construction does not reflect an obstruction in the smooth
category itself. Crucially, our $C^\infty$ result is not obtained by taking
a limit of finite-$C^k$ constructions of the kind just discussed: it
bypasses derivative loss entirely by separating the arithmetic locus from
the whole non-analyticity locus of the map. Infinite smoothness still
permits a remarkably strong form of arithmetic flexibility.

\begin{theorem}\label{thm:headline}
There exists an orientation-preserving $C^\infty$ diffeomorphism $f:\R\to\R$, transcendental over $\R(x)$, such that for every real number field $K\subset\R$, every $n\geq1$, and every $m\geq0$,
\[
D^m(f^{\circ n})(K)\subseteq K,
\qquad
D^m(f^{\circ n})(\Lio)\subseteq\Lio.
\]
Moreover, such diffeomorphisms may be chosen arbitrarily close to the identity in the compact-open $C^\infty$ topology.
\end{theorem}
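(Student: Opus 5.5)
Fix a nonempty compact perfect nowhere-dense set $E\subset\R$ disjoint from $\Ralg\cup\Lio$. Such a set exists: the set of real numbers that are neither algebraic nor Liouville is a dense $F_\sigma$-type set of full measure, so it contains a Cantor set $E$. The plan is to build $f$ so that on each bounded complementary interval $J$ of $E$, and on the two unbounded components, $f$ coincides with a rational function $R_J\in\Q(x)$. Every real algebraic or Liouville point $x$ lies off $E$, hence in some open component $J$. Near $x$ we then have $f^{\circ n}=R_{J_n}\circ\cdots\circ R_{J_1}$ on a neighbourhood, with $J_{k+1}$ the component containing $f^{\circ k}(x)$. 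For this to work we need $f$ to map components of $\R\setminus E$ to components, i.e.\ $f(E)=E$; the simplest choice is $f|_E=\id$, with each component $J$ mapped onto itself. Then $D^m(f^{\circ n})=D^m(R_J^{\circ n})$ on $J$, and $D^m(R_J^{\circ n})$ is a rational function in $\Q(x)$. It is nonconstant for $m=0$, and for $m\ge1$ it is either a nonconstant rational function or a rational constant. Rational constants preserve $K$ but not $\Lio$, so we must ensure that every derivative $D^m(R_J^{\circ n})$ is nonconstant, or choose $R_J$ so that this holds. Given that, Maillet's theorem gives preservation of $\Lio$, and $\Q$-rationality gives preservation of $K$.

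To avoid constant derivatives we will take each $R_J$ to be a genuinely non-polynomial rational function, for instance a Möbius-type or higher-degree rational map with no poles on $\overline J$. The iterates of such a map have degree at least $2$ as rational functions, and a rational function that is not a polynomial has no derivative that is identically constant. We will also need $R_J(\overline J)=\overline J$, $R_J'>0$ on $\overline J$, and $R_J$ fixing the endpoints of $J$. The endpoints of $J$ lie in $E$, hence are transcendental. So we cannot simply impose $R_J(a)=a$ with $R_J\in\Q(x)$. The resolution is to let $f$ be rational only on a compact subinterval $J'\Subset J$, and to interpolate $C^\infty$-smoothly to the identity near the endpoints on the remaining part of $J$. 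This collar region must contain no algebraic or Liouville points, which is impossible, since such points are dense. Hence, on all of $J$, $f$ must equal the rational function exactly. Equivalently, the pieces $R_J$ must accumulate, in $C^\infty$ on $\overline J$, to $x$ near the endpoints: we require $\norm{R_J-\id}_{C^{k}(\overline J)}\to0$ as $|J|\to0$, with derivatives matching those of the identity at $E$ in the limit. We therefore relax the requirement that $R_J$ fix the endpoints exactly. Instead $f$ maps $J$ onto a component $\sigma(J)$, via a bijection $\sigma$ of the components that fixes the unbounded ones, and $f|_E$ is a homeomorphism of $E$ close to the identity. This is the Körner-type sewing step: choose $R_J\in\Q(x)$ approximating the affine map $J\to\sigma(J)$ extremely well. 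Because the endpoints are non-algebraic, we instead choose $E$ and $\sigma$ during the construction, building $E$ as the intersection of nested unions of intervals with rational endpoints that shrink in a controlled way.

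The construction proceeds by induction, Cantor-scheme style. At stage $k$ we have finitely many closed intervals $I_{k,i}$ with rational endpoints, and on the gaps between them we have already fixed $f=R_J\in\Q(x)$, with $R_J$ non-polynomial, increasing, and mapping that gap into a neighbourhood of itself. Inside each $I_{k,i}$ we choose a new gap $J$ and a rational function $R_J$. $R_J$ must agree with the previously defined pieces up to error $\varepsilon_k$ in $C^k$ on the relevant region, and must avoid the first $k$ algebraic numbers of height and degree at most $k$, plus the Liouville approximants. Then we shrink. Two things must be guaranteed at the limit. First, the limit $E$ avoids $\Ralg\cup\Lio$; this is arranged by enumerating algebraic numbers and by using the shrinking to force limit points to have bounded irrationality exponent. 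Second, $f$ is $C^\infty$ across $E$, with all derivatives of $f-\id$ tending to $0$ at $E$; this comes from summable $C^k$ errors. We then check that $f$ is a bijection, with $f'>0$ everywhere. Transcendence over $\R(x)$ follows because $f$ is not real-analytic at points of $E$: infinitely many distinct rational pieces accumulate there, whereas an algebraic function would be analytic off a finite set. Closeness to the identity follows by starting with $R=\id$-approximants with small $\varepsilon_0$.

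The main obstacle is this sewing step. We must match a new rational piece across a gap, to arbitrary $C^k$ precision, at points that are ultimately transcendental limits. At the same time the iterates must respect the component structure (so $f(E)=E$ and each piece $f^{\circ n}|_J$ is a composition of finitely many $R$'s), and $E$ must stay free of Liouville numbers. The latter I expect to handle by making the interval lengths at stage $k$ comparable to a fixed power of their rational endpoints' denominators.
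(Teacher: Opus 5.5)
There is a genuine gap, and it starts with your assertion that you need $f$ to map components of $\R\setminus E$ onto components, i.e.\ $f(E)=E$. That requirement is unnecessary. It is what pushes you into building $E$, a permutation $\sigma$ of the gaps, and pieces $R_J\in\Q(x)$ with $R_J(\overline J)=\overline{\sigma(J)}$ all at once. That construction is only announced: the $C^\infty$ sewing across transcendental endpoints, bijectivity, and the exclusion of $\Ralg\cup\Lio$ from the limit set are all left open, as you acknowledge.

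The missing observation is that one-step invariance already keeps every relevant orbit off $E$. Suppose $f$ agrees near each point of $\R\setminus E$ with a nonconstant function in $\Q(x)$, and $E\cap(\Ralg\cup\Lio)=\varnothing$. Then Maillet's theorem and the rationality of coefficients give $f(\Lio)\subseteq\Lio$ and $f(K)\subseteq K$. So each $f^{\circ k}(\xi)$ lies off $E$, and by continuity $f^{\circ n}=R_{J_n}\circ\cdots\circ R_{J_1}$ on a neighbourhood of $\xi$. This is precisely the local formula you wrote, and it imposes no condition on $f(E)$.

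Once this is seen, $E$ can be fixed in advance as in your first sentence. All that is then needed is a $C^\infty$ function that is a $\Q[x]$-polynomial on each gap and non-analytic on $E$. The paper gets this from an arithmetic version of K\"orner's sewing. At stage $n$ the newest gap receives a polynomial in $\Q[x]$ that approximates the current real piece through order $n$ and has larger degree than all previous ones. The residual intervals absorb the order-$(n+1)$ jet mismatch via Lemma~\ref{lem:high-jet}. The paper then takes $f=x+\varepsilon(g+h)$ with $h=1/(1+x^2)$ and $\varepsilon\in\Q$ small, which yields the diffeomorphism and closeness to $\id$ immediately.

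Second, Maillet's theorem needs every $D^m(R_{J_n}\circ\cdots\circ R_{J_1})$ to be nonconstant, and your argument does not secure this. You only discuss iterates of a single piece, and even there the claim fails. M\"obius maps have degree $1$, and non-polynomiality is not stable under composition: $(-1/z)\circ(-1/z)=z$ and $(-1/z^3)\circ(-1/z^3)=z^9$, both built from increasing non-polynomial pieces. Once a composition is a polynomial, its high derivatives are rational constants, which are not Liouville.

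Since your scheme composes \emph{different} pieces along $\sigma$-orbits, you need a certificate of nonconstancy that is closed under composition. The paper uses anchored germs, i.e.\ $R(\infty)=\infty$ together with at least one finite pole. If the inner map is nonconstant and fixes $\infty$, every preimage of a finite pole $w$ of the outer map is finite, so the composition keeps a finite pole; and a pole persists under differentiation. The paper realizes this uniformly through the term $\varepsilon/(1+z^2)$, with poles $\pm i$. The condition $|\varepsilon|\,\norm{g'}_{L^\infty(\R)}<\tfrac12$ guarantees that $z+\varepsilon P_I(z)$ is nonconstant, so that $R_I(\infty)=\infty$.
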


Already the case $m=0$ and $n=1$ yields a transcendental smooth function
satisfying
\[
f(\Lio)\subseteq\Lio.
\]
Thus Mahler's problem has a strong affirmative counterpart in the smooth
category. The conclusion is substantially stronger than the mere preservation of
Liouville numbers: a single map simultaneously preserves every real number
field and the full Liouville set, and both invariances survive under every
derivative of every positive iterate.

\subsection*{The separation principle}

The distinction between $C^\infty$ smoothness and analyticity is essential
to our construction. A direct extension of the finite-order approach would
seek arithmetic estimates compatible with derivatives of arbitrarily high
order. We take a different route, based on a \emph{geometric separation
principle}: all non-analyticity is confined to a perfect set containing no
arithmetic point that the theorem needs to control. Away from this set, the
map is locally an element of $\Q(x)$. Maillet's theorem can therefore be
applied locally, while transcendence is forced globally by preventing the
different rational germs from analytically continuing into one another.

Schleischitz~\cite{Schleischitz2025} observed that piecewise-defined
continuous functions which are locally rational over $\Q$ already have
Maillet's property. The new difficulty here is to realize this principle in
class $C^\infty$, while simultaneously enforcing global transcendence and
ensuring that the relevant rational germs remain nonconstant under
differentiation and iteration. The construction addresses these requirements
by prescribing exactly the non-analyticity locus and by introducing a
rational perturbation whose nonreal poles provide a persistent certificate
of nonconstancy.

More precisely, Theorem~\ref{thm:cloaking} gives a
prescribed-singularity form of the main result. Given any nonempty compact
perfect nowhere-dense set $E$ disjoint from the real algebraic and Liouville
numbers, the diffeomorphism may be chosen to be non-analytic exactly on $E$
and locally rational over $\Q$ on every component of $\R\setminus E$.

The technical core is the arithmetic polynomial sewing theorem,
Theorem~\ref{thm:sewing}. For every nonempty compact perfect nowhere-dense set
$E\subset\R$ and every dense subfield $F\subset\R$, it produces a function
$g\in C_c^\infty(\R)$ which agrees on each connected component of
$\R\setminus E$ with a polynomial in $F[x]$, while being non-analytic
exactly on $E$. The essential additional feature relative to the underlying
sewing construction is the exact coefficient restriction. It cannot be
imposed by a terminal approximation; rather, it is incorporated into the
induction by fixing the polynomial assigned to one new gap at each stage,
while the remaining intervals accommodate the resulting highest-order jet
discrepancy.

To prove Theorem~\ref{thm:headline}, we choose $E$ containing neither real
algebraic nor Liouville numbers and apply Theorem~\ref{thm:sewing} with
$F=\Q$. A fixed rational perturbation with nonreal poles then converts the
local polynomial models into rational ones whose relevant compositions and
derivatives remain nonconstant. Maillet's theorem applies along Liouville
orbits, while evaluation of rational functions over $\Q$ preserves every
real number field. The prescribed non-analyticity, on the other hand,
forces global transcendence.

The sewing mechanism itself is not specific to Liouville numbers. More
generally, the same separation philosophy may apply whenever the arithmetic
set under consideration is suitably invariant under the relevant rational
maps and can be kept disjoint from the prescribed non-analyticity locus.
We return briefly to this point and to the analytic barrier in the
concluding remarks.

\subsection*{Organization}

The paper is organized as follows. Section~\ref{sec:prelim} records the
arithmetic and regularity facts used throughout. Section~\ref{sec:invisible}
constructs arithmetically invisible perfect sets. Section~\ref{sec:sewing}
proves the arithmetic polynomial sewing theorem. Section~\ref{sec:germs}
develops the rational-germ and pole mechanism. Section~\ref{sec:construction}
proves the prescribed-singularity theorem and then
Theorem~\ref{thm:headline}. Finally, Section~\ref{sec:concluding} discusses the scope and limitations of the method.

\section{Preliminaries}\label{sec:prelim}

Throughout, $\Ralg:=\overline{\Q}\cap \R$ denotes the set of real algebraic numbers. A real number $\xi$ is a \emph{Liouville number} if, for every integer $N\geq1$, there are integers $p$ and $q\geq2$ such that
\[
0<\abs{\xi-\frac pq}<q^{-N}.
\]
By a \emph{real number field} we mean a finite extension $K/\Q$ contained in $\R$.
For a smooth function $u$, we write $D^m u$ for its $m$th derivative, with $D^0u=u$, and $f^{\circ n}$ for the $n$th iterate of a map $f$. We denote by $\NA(u)$ the set of points at which $u$ is not real-analytic. A function $u:J\to\R$ is called \emph{algebraic over $\R(x)$} if
$P(x,u(x))=0$ on $J$ for some nonzero $P\in\R[X,Y]$; otherwise it is
\emph{transcendental over $\R(x)$}.

We endow $C^\infty(\R)$ with its usual compact-open Fr\'echet topology, generated by the seminorms
\[
p_{A,r}(u)
:=\max_{0\leq j\leq r}\sup_{|x|\leq A}\abs{u^{(j)}(x)},
\qquad A,r\in\N.
\]
For a compact interval $J$, we write
\[
\norm{u}_{C^r(J)}
:=\max_{0\leq j\leq r}\sup_{x\in J}\abs{u^{(j)}(x)}.
\]

The arithmetic tool is the following classical theorem.

\begin{theorem}[Maillet {\cite[Chap.~III, Th\'eor\`eme~I$_3$
and Corollaire, pp.~32--33]{Maillet1906}}]\label{thm:maillet}
Let $R\in\Q(x)$ be nonconstant. If $\xi\in\Lio$, then $R(\xi)\in\Lio$.
\end{theorem}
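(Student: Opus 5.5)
The plan is the classical Liouville-type argument. Write $R=P/Q$ with $P,Q\in\Z[x]$ coprime and $d=\max(\deg P,\deg Q)\geq1$. Fix $\xi\in\Lio$. First dispose of poles: $Q(\xi)\neq0$ because $\xi$ is transcendental, since Liouville's inequality shows $\Lio\cap\Ralg=\emptyset$. For the same reason $R(\xi)$ is transcendental. Indeed, if $R(\xi)=\beta$ were algebraic, then $\xi$ would be a root of the nonzero polynomial $P(x)-\beta Q(x)$ with algebraic coefficients, which is nonzero because $R$ is nonconstant. So $R(\xi)$ is irrational, and the condition $0<\abs{R(\xi)-p'/q'}$ will hold automatically.

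Next, approximate $R(\xi)$ using the rational approximations of $\xi$. Choose a compact interval $J\ni\xi$ on which $Q$ has no zero. Then $R$ is Lipschitz on $J$ with some constant $L$, and $\abs{Q}\geq c>0$ on $J$. Given a target exponent $N$, take $p/q$ with $q\geq2$ and $\abs{\xi-p/q}<q^{-M}$, where $M$ is large and will be fixed in terms of $N$ and $d$. For $q$ large we have $p/q\in J$. Put
\[
\frac{p'}{q'}:=R(p/q)=\frac{q^dP(p/q)}{q^dQ(p/q)}.
\]
Both numerator and denominator here are integers of size $O(q^d)$, so we may take $q'\leq C q^d$, with $q'\geq1$ after fixing signs. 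Then
\[
\abs{R(\xi)-\frac{p'}{q'}}\leq L q^{-M}\leq L\,(q'/C)^{-M/d}.
\]
Choosing $M\geq d(N+1)$ makes this smaller than $q'^{-N}$ once $q'$ is large.

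The step needing care is the denominator: we must ensure $q'\geq2$ and $q'\to\infty$, because a bounded $q'$ would not witness the Liouville condition. This follows from irrationality. Suppose $q'$ stayed bounded along a sequence of approximations with $M\to\infty$. Then $p'/q'$ would range over a discrete set while converging to $R(\xi)$, so it would eventually equal $R(\xi)$, contradicting $R(\xi)\notin\Q$. Also, infinitely many distinct $p/q$ with $q\to\infty$ exist, because $\xi$ is irrational.

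Finally, run this for every $N$: pick $M=d(N+1)$, then pick an approximation with $q$ large enough that $p/q\in J$ and $q'$ exceeds both $2$ and the implied constants. This yields $0<\abs{R(\xi)-p'/q'}<q'^{-N}$ with $q'\geq2$, so $R(\xi)\in\Lio$.
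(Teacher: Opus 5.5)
Your proof is correct. The paper gives no proof of this statement. It cites Maillet's book and adds only the remark that $R(\xi)$ is defined because the poles of $R\in\Q(x)$ are algebraic while Liouville numbers are transcendental. So there is no internal argument to compare against.

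What you supply is the standard self-contained Liouville-type proof:
\begin{itemize}
\item push a very good approximant $p/q$ of $\xi$ forward by $R$;
\item note that $R(p/q)$ has a denominator $q'\le Cq^d$;
\item transfer the approximation quality through a Lipschitz bound on a pole-free interval $J$;
\item use the irrationality (indeed transcendence) of $R(\xi)$ twice: to get $0<\abs{R(\xi)-p'/q'}$, and, via discreteness of rationals with bounded denominator, to force $q'\to\infty$.
\end{itemize}
This makes the paper independent of the 1906 reference at modest cost. It also covers exactly what is used later, namely arbitrary nonconstant $S^{(m)}\in\Q(z)$ in Proposition~\ref{prop:germ-principle}.

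Two small points deserve to be made explicit.

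First, choose $J$ with $\xi$ in its interior, say $J=[\xi-\rho,\xi+\rho]$. Then approximants sufficiently close to $\xi$ actually lie in $J$.

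Second, in the final step you fix $M=d(N+1)$ but need approximants with $q'$ arbitrarily large. Obtain them from Liouville approximants $p_k/q_k$ for exponents $M_k\to\infty$:
\begin{itemize}
\item they still satisfy $\abs{\xi-p_k/q_k}<q_k^{-M}$, because $q_k\ge2$;
\item they converge to $\xi$, because $q_k^{-M_k}\le2^{-M_k}$;
\item hence $R(p_k/q_k)\to R(\xi)$, and your discreteness argument applies to this sequence and gives $q_k'\to\infty$.
\end{itemize}
Your proof already contains both ingredients; they just need to be stated in this order.
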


The value $R(\xi)$ is automatically well-defined, since every pole of
a rational function in $\Q(x)$ is algebraic, whereas every Liouville
number is transcendental.

We shall also use the classical fact that a $C^\infty$ algebraic
function of one real variable is necessarily real-analytic; see
Koll\'ar \cite[p.~312, Remark following Definition~9]{Kollar2017}.
See also Bochnak--Coste--Roy \cite[\S8.1]{BCR1998}.

\begin{theorem}[Smooth algebraic functions are analytic]\label{thm:smooth-algebraic}
Let $J\subset\R$ be an open interval and let $u\in C^\infty(J)$. If there exists a nonzero polynomial $P\in\R[X,Y]$ such that
\[
P(x,u(x))=0
\qquad(x\in J),
\]
then $u$ is real-analytic on $J$.
\end{theorem}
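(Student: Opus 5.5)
The statement to prove is Theorem~\ref{thm:smooth-algebraic}: a $C^\infty$ function on an open interval $J$ satisfying a nonzero polynomial relation $P(x,u(x))=0$ is real-analytic on $J$.

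\emph{Reduction.} I will first reduce to the case where $P$ is irreducible and genuinely involves $Y$. If $P$ does not involve $Y$, then $P(x)=0$ on $J$ is impossible for a nonzero polynomial in $x$. Otherwise, factor $P=\prod_i P_i$ into irreducible factors in $\R[X,Y]$. For each $i$, the set $Z_i=\{x\in J: P_i(x,u(x))=0\}$ is closed in $J$, and $\bigcup_i Z_i=J$. By Baire, the union of the interiors of the $Z_i$ is open and dense in $J$. Factors $P_i\in\R[X]\setminus\R$ vanish only at finitely many $x$, so they have empty interior and may be discarded. Let $P$ now be irreducible with $\deg_Y P=d\ge1$. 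Its discriminant with respect to $Y$, or, when $d=1$, its leading coefficient, is a nonzero polynomial $\Delta(x)$: irreducibility gives $\gcd(P,\partial_YP)=1$ in $\R(X)[Y]$. Hence the set $S=\{x:\Delta(x)=0\}\cup\{x:a_d(x)=0\}$ is finite.

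\emph{Analyticity off a finite set.} On $J\setminus S$, the polynomial $Y\mapsto P(x,Y)$ has simple roots. So at any $x_0\in\operatorname{int}Z_i\setminus S$ we have $\partial_YP(x_0,u(x_0))\neq0$, and the real-analytic implicit function theorem gives an analytic $v$ near $x_0$ with $P(x,v)=0$ and $v(x_0)=u(x_0)$. Uniqueness in the implicit function theorem, together with continuity of $u$, forces $u=v$ near $x_0$. To pass from the dense open set to all of $J\setminus S$, I will use the same argument with the roots: for $x_0\in J\setminus S$, the value $u(x_0)$ is a simple root of $P(x_0,\cdot)$ for the relevant factor. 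More cleanly, near any $x_0\notin S'$, where $S'$ is the finite union of the bad sets of all factors, the real roots of each $P_i(x,\cdot)$ form finitely many disjoint analytic branches. The continuous function $u$ takes values in their union, so by connectedness it follows a single branch. Thus $u$ is analytic on $J\setminus S'$.

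\emph{Main obstacle: the finitely many bad points.} At each $x_0\in S'$, I will use Puiseux expansions. Near $x_0$, on each side, every root branch of $P$ is given by a convergent series $\sum_k c_k (x-x_0)^{k/N}$, possibly with negative powers, and $u$ coincides on $(x_0,x_0+\epsilon)$ with one real branch, and similarly on the left. Since $u$ is bounded near $x_0$, there are no negative exponents. Since $u$ is $C^\infty$, each one-sided derivative $u^{(k)}(x)$ stays bounded as $x\to x_0^+$. A term $c(x-x_0)^{q}$ with the smallest non-integer exponent $q$ would make $u^{(\lceil q\rceil)}$ blow up, or make it fail to be continuous when the exponent stays fractional. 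Hence all fractional coefficients vanish, and the right branch is a convergent power series $A(x)$ in $x-x_0$; likewise the left branch is a convergent power series $B(x)$. Smoothness at $x_0$ forces $A^{(k)}(x_0)=u^{(k)}(x_0)=B^{(k)}(x_0)$ for all $k$, so $A=B$ as power series. Therefore $u$ is analytic near $x_0$.

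The delicate point is the careful Puiseux bookkeeping: checking that a nonzero fractional term really contradicts $C^\infty$, and that the one-sided branch selection is legitimate. I expect this to be the main work. Alternatively, at this step I will simply cite Koll\'ar and Bochnak--Coste--Roy, as the paper does.
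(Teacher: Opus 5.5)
Your fallback of citing Koll\'ar and Bochnak--Coste--Roy is exactly what the paper does: it gives no proof of this statement and quotes it as a classical fact, treated in the references in the setting of Nash functions. Your main argument is a genuinely self-contained one-variable proof. Away from a finite bad set you use the analytic implicit function theorem; at the bad points you use Newton--Puiseux, let smoothness kill the fractional exponents, and match the two one-sided power series through the Taylor coefficients of $u$ at $x_0$. The citation buys brevity and generality. Your route needs only the analytic implicit function theorem and convergence of Puiseux expansions. The overall structure is sound, but one step fails as written.

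\textbf{The gap: the bad set $S'$ is too small.} You define $S'$ as the union of the bad sets of the individual irreducible factors. This misses the points where a real root branch of $P_i$ meets a real root branch of a different factor $P_j$. At such points your claim that a continuous $u$ with values in the union of the branches must follow a single branch is false. Take $P=(Y-X)(Y+X)$: both factors have empty bad set, yet $u(x)=|x|$ is continuous, satisfies $P(x,u(x))=0$, and switches branches at $0$. Since that step uses only continuity, it would ``prove'' that $|x|$ is analytic.

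The repair is easy. Also put into $S'$ the zeros of $\operatorname{Res}_Y(P_i,P_j)$ for all pairs of non-associate factors of positive $Y$-degree. These resultants are nonzero polynomials in $X$, because such factors are coprime in $\R(X)[Y]$. Equivalently, use the discriminant and leading coefficient of the squarefree product of these factors. The enlarged $S'$ is still finite, and your Puiseux step handles its points verbatim. That step only needs $x_0$ to be isolated in $S'$, so that on each punctured side the real roots are pairwise distinct analytic arcs. Alternatively, at a crossing, two distinct analytic branches cannot agree to infinite order, so a $C^\infty$ function cannot switch between them.

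\textbf{The steps you flag as delicate do go through.}
\begin{enumerate}[label=\textup{(\arabic*)}]
\item \emph{Branch selection.} On $(x_0,x_0+\epsilon)$ put $t=(x-x_0)^{1/N}>0$. A branch $\psi(t)=\sum_k c_kt^k$ is real for all small $t>0$ exactly when all $c_k$ are real. Otherwise $\operatorname{Im}\psi(t)$ is a nonzero convergent series in $t$ with no zeros on some interval $(0,\delta)$. So the real roots form finitely many disjoint arcs, and connectedness selects one.
\item \emph{No fractional exponents.} Let $q$ be the least non-integer exponent occurring with coefficient $c\neq0$, and put $m=\lceil q\rceil$. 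Termwise differentiation gives $u^{(m)}(x)=c\,q(q-1)\cdots(q-m+1)\,(x-x_0)^{q-m}\bigl(1+o(1)\bigr)$ as $x\to x_0^+$. The falling factorial is nonzero and $q-m\in(-1,0)$, so $u^{(m)}$ is unbounded near $x_0$. Your alternative ``fails to be continuous'' is never needed.
\end{enumerate}
Two smaller points: the Baire step is superfluous once the branch argument is in place. Discarding the factors lying in $\R[X]$ is justified by continuity of $x\mapsto\prod_{\deg_YP_i\geq1}P_i(x,u(x))$, which vanishes off a finite set.
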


With these arithmetic and regularity facts in place, we now turn to the geometric ingredient of the construction.

\section{An arithmetically invisible perfect set}\label{sec:invisible}

We shall need a compact perfect set containing neither real algebraic nor
Liouville numbers. In fact, the exceptional set can be chosen metrically
very large. This strengthens the geometric separation underlying the
construction: the eventual diffeomorphism may fail to be analytic on
almost all of a prescribed interval while remaining locally rational at
every algebraic and every Liouville point.

\begin{proposition}[Large invisible perfect sets]\label{prop:invisible}
For every $0<\eta<1$, there exists a nonempty compact perfect
nowhere-dense set $E\subset(0,1)$ such that
\[
E\cap(\Ralg\cup\Lio)=\varnothing
\]
and
\[
\lambda(E)>1-\eta,
\]
where $\lambda$ denotes Lebesgue measure.
\end{proposition}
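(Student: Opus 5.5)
We remove from $[0,1]$ a countable union of small open intervals that covers every real algebraic number and every Liouville number in $(0,1)$ and is dense, then discard isolated points. Two facts drive this. First, $\Ralg\cap(0,1)$ is countable. Second, $\Lio$ is a $G_\delta$ set of Lebesgue measure zero, but it is uncountable, so it cannot be handled point by point. We deal with $\Lio$ through its definition. Fix an integer $N\geq 3$ and set
\[
U_N:=\bigcup_{q\geq2}\ \bigcup_{p\in\Z}\Bigl(\tfrac pq-q^{-N},\tfrac pq+q^{-N}\Bigr).
\]
Every Liouville number lies in $U_N$, and $U_N$ is open and contains every rational $p/q$ with $q\geq2$. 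Its measure inside $[0,1]$ is at most $\sum_{q\geq2}(q+1)\cdot 2q^{-N}$, which tends to $0$ as $N\to\infty$. Choose $N$ so that $\lambda(U_N\cap[0,1])<\eta/3$.

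Next enumerate $\Ralg\cap[0,1]=\{\alpha_1,\alpha_2,\dots\}$ and let $V$ be the union of the open intervals of length $\eta 2^{-k}/3$ centred at $\alpha_k$. Then $\lambda(V)\leq\eta/3$. Put
\[
E_0:=[\delta,1-\delta]\setminus(U_N\cup V),
\]
with $\delta>0$ small. Alternatively, since $1/2\in V$ already, we can write $E_0:=[0,1]\setminus(U_N\cup V\cup[0,\delta)\cup(1-\delta,1])$. The set $E_0$ is compact and contained in $(0,1)$, and $\lambda(E_0)>1-\eta$ once $\delta<\eta/6$. It avoids $\Ralg\cup\Lio$ by construction. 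It is nowhere dense because $U_N\cup V$ contains all rationals in $(0,1)$, hence is dense, and the complement of a dense open set has empty interior.

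It remains to make the set perfect. By Cantor--Bendixson, $E_0=P\cup C$ with $P$ perfect and $C$ countable. Take $E:=P$. Then $P$ is closed, and removing the countable set $C$ does not change the measure, so $\lambda(P)=\lambda(E_0)>1-\eta>0$. In particular $P$ is nonempty. As a subset of $E_0$, $P$ is nowhere dense and disjoint from $\Ralg\cup\Lio$.

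The main obstacle is the Liouville part: $\Lio$ is uncountable and dense, so an enumeration-based cover fails. The whole argument therefore hinges on the single-level neighbourhood $U_N$, together with the measure estimate $\sum_q (q+1)q^{-N}\to0$.
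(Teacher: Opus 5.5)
Your proof is correct. It ends the same way as the paper's (a compact set of measure $>1-\eta$ avoiding $\Ralg\cup\Lio$, then Cantor--Bendixson), but it reaches that compact set by a more explicit route.

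The paper quotes two facts: $\Ralg$ is countable and $\lambda(\Lio)=0$. Hence $(0,1)\setminus(\Ralg\cup\Lio)$ has full measure, and inner regularity supplies a compact $K$ inside it with $\lambda(K)>1-\eta$. Nowhere density then follows because $E$ contains no rationals.

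You instead remove an explicit open set $U_N\cup V$ of measure $<2\eta/3$. Compactness of $E_0$ is then automatic, and no regularity theorem is needed. Your key observation, that $\Lio\subset U_N$ for a single fixed $N$, in effect reproves $\lambda(\Lio)=0$.

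The paper's version is shorter and applies verbatim to any null exceptional set. Yours is self-contained and yields more: every $\xi\in E$ satisfies $|\xi-p/q|\geq q^{-N}$ for all $p\in\Z$ and $q\geq 2$. So $E$ has irrationality exponent uniformly bounded by $N$ while still having measure $>1-\eta$. This is a quantitative complement to Remark~\ref{rem:badly-approximable}, since sets of badly approximable numbers necessarily have measure zero.

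A few points are cosmetic. The aside about $1/2\in V$ is a non sequitur, and the $G_\delta$ property of $\Lio$ is never used. The $\delta$-trimming is also unnecessary, because $0=0/2$ and $1=2/2$ already lie in $U_N$.
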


\begin{proof}
The set $\Ralg$ is countable, while the set $\Lio$ of Liouville numbers
has Lebesgue measure zero. Hence
\[
X:=(0,1)\setminus(\Ralg\cup\Lio)
\]
has full Lebesgue measure in $(0,1)$. By inner regularity of Lebesgue
measure, there exists a compact set $K\subset X$ such that
\[
\lambda(K)>1-\eta.
\]

By the Cantor--Bendixson theorem, $K$ admits a decomposition
\[
K=E\cup C,
\]
where $E$ is perfect and $C$ is countable. Since $C$ has measure zero,
\[
\lambda(E)=\lambda(K)>1-\eta.
\]
In particular, $E$ is nonempty. Moreover,
\[
E\subset K\subset X,
\]
and therefore
\[
E\cap(\Ralg\cup\Lio)=\varnothing.
\]

Finally, $\Q\subset\Ralg$ is dense in $\R$. Since $E$ contains no
algebraic number, it cannot contain a nonempty open interval. Thus $E$
has empty interior; being closed, it is nowhere dense.
\end{proof}

\begin{remark}\label{rem:badly-approximable}
For the main construction, only
\[
E\cap(\Ralg\cup\Lio)=\varnothing
\]
is needed. Alternatively, one may choose $E$ to consist entirely of badly approximable numbers. Indeed, the continued-fraction Cantor set
\[
\mathcal C_2
=
\set{[0;a_1,a_2,\ldots]:a_j\in\{1,2\}\ \text{for all }j\geq1}
\]
is compact and perfect, and satisfies
\[
\abs{x-\frac pq}>\frac1{4q^2}
\qquad
(x\in\mathcal C_2,\ p\in\Z,\ q\geq1);
\]
see Khinchin \cite[Chapters~I--II]{Khinchin1997}. Removing the countable set $\Ralg$ by a standard perfect-set extraction yields a nonempty compact perfect set
\[
E\subset\mathcal C_2\setminus\Ralg
\]
with the same estimate.
\end{remark}

\section{Arithmetic polynomial sewing}\label{sec:sewing}

Körner \cite{Korner2007} characterized the closed sets on which a smooth function may fail to be locally polynomial.  We require an arithmetic refinement: the local polynomials must lie in a prescribed dense subfield.  We give a complete proof because this coefficient restriction is the main arithmetic ingredient of the construction.

\subsection*{Jet interpolation}

We begin with a quantitative form of Hermite interpolation that will control the lower-order endpoint data throughout the sewing process.

\begin{lemma}[Hermite interpolation with norm control]\label{lem:hermite}
Let $r\geq0$ and let $J=[a,b]$ with $a<b$. Then there exists a constant $C=C(r,J)$ such that, for arbitrary real numbers $\alpha_0,\dots,\alpha_r$ and $\beta_0,\dots,\beta_r$, one can find a polynomial $H$ of degree at most $2r+1$ satisfying
\[
 H^{(j)}(a)=\alpha_j,
 \qquad
 H^{(j)}(b)=\beta_j
 \qquad(0\leq j\leq r),
\]
and
\[
 \norm{H}_{C^r(J)}
 \leq C\max_{0\leq j\leq r}\{ |\alpha_j|,|\beta_j|\}.
\]
\end{lemma}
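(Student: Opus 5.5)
The plan is to prove the statement by pure linear algebra on the finite-dimensional space $\mathcal P_{2r+1}$ of real polynomials of degree at most $2r+1$, which has dimension $2r+2$. Consider the linear map
\[
T:\mathcal P_{2r+1}\to\R^{2r+2},\qquad
T(H)=\bigl(H(a),H'(a),\dots,H^{(r)}(a),H(b),H'(b),\dots,H^{(r)}(b)\bigr).
\]
The first step is to show that $T$ is injective. If $T(H)=0$, then $H$ vanishes to order at least $r+1$ at both $a$ and $b$. Hence $(x-a)^{r+1}(x-b)^{r+1}$ divides $H$. This product has degree $2r+2>\deg H$, so $H=0$. Since the domain and codomain have the same dimension, $T$ is a linear isomorphism. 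Existence of $H$ with the prescribed jets then follows by setting $H:=T^{-1}(\alpha_0,\dots,\alpha_r,\beta_0,\dots,\beta_r)$.

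The second step is the norm bound. On $\R^{2r+2}$ use the max norm. On $\mathcal P_{2r+1}$ the quantity $\norm{\cdot}_{C^r(J)}$ is a genuine norm: it is a seminorm, and it is definite because a polynomial vanishing on the nondegenerate interval $J$ is zero. All linear maps between finite-dimensional normed spaces are bounded. Taking $C(r,J)$ to be the operator norm of $T^{-1}$ between these norms gives
\[
\norm{H}_{C^r(J)}\leq C\max_{0\leq j\leq r}\{|\alpha_j|,|\beta_j|\}.
\]
The constant depends only on $r$ and on $J$, through $a$ and $b$, as required.

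No step is a serious obstacle. The only point needing care is checking that $\norm{\cdot}_{C^r(J)}$ is a norm on $\mathcal P_{2r+1}$, which uses $a<b$. If an explicit constant were wanted, one could instead write $H=\sum_j\alpha_jA_j+\sum_j\beta_jB_j$ in terms of the fixed Hermite basis polynomials $A_j$, $B_j$ (the preimages under $T$ of the standard basis vectors). Then $C=\sum_j\bigl(\norm{A_j}_{C^r(J)}+\norm{B_j}_{C^r(J)}\bigr)$ works. However, only existence of the constant is needed in the sewing argument.
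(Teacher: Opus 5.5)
Your proposal is correct and follows the paper's proof essentially verbatim. The jet map $T$ on $\mathcal P_{2r+1}$ is injective by the divisibility argument, hence an isomorphism, and the constant comes from continuity of $T^{-1}$ between finite-dimensional normed spaces; your extra check that $\norm{\cdot}_{C^r(J)}$ is definite on polynomials (using $a<b$) and your explicit Hermite-basis constant are valid refinements that the paper leaves implicit.
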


\begin{proof}
Let $\mathcal P_{2r+1}$ be the real vector space of polynomials of degree at most $2r+1$ and consider the linear map
\[
T:\mathcal P_{2r+1}\longrightarrow\R^{2r+2},
\qquad
T(H)
=\bigl(H(a),\dots,H^{(r)}(a),H(b),\dots,H^{(r)}(b)\bigr).
\]
If $T(H)=0$, then $(x-a)^{r+1}(x-b)^{r+1}$ divides $H$, which is impossible unless $H=0$ because the divisor has degree $2r+2$. Thus $T$ is injective, and dimension equality makes it an isomorphism. Equip $\mathcal P_{2r+1}$ with the $C^r(J)$ norm and $\R^{2r+2}$ with the maximum norm. Since $T^{-1}$ is a linear map between finite-dimensional normed spaces, it is continuous, and hence there is a constant $C=C(r,J)$ such that
\[
\|H\|_{C^r(J)}
\leq C\|T(H)\|_\infty
=
C\max_{0\leq j\leq r}\{|H^{(j)}(a)|,|H^{(j)}(b)|\}.
\]
This gives the required estimate.
\end{proof}

The next lemma is the device that makes the induction possible: an arbitrary derivative of one new order can be imposed at the endpoints at arbitrarily small cost in all lower derivatives.

\begin{lemma}[Small correction of a higher jet]\label{lem:high-jet}
Let $s\geq1$, let $J=[a,b]$ be a nondegenerate compact interval,
let $A,B\in\R$, and let $\eta>0$. Then there exists a polynomial $Q$ such that
\begin{align*}
Q^{(j)}(a)=Q^{(j)}(b)&=0 &&(0\leq j<s),\\
Q^{(s)}(a)&=A, & Q^{(s)}(b)&=B,
\end{align*}
and
\[
\norm{Q}_{C^{s-1}(J)}<\eta.
\]
\end{lemma}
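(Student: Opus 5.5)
We will build $Q$ as a rescaled bump-type polynomial. By linearity it is enough to construct, for each endpoint separately, polynomials $Q_a$ and $Q_b$ with $Q_a^{(j)}(a)=Q_a^{(j)}(b)=0$ for $0\leq j<s$, $Q_a^{(s)}(a)=A$, $Q_a^{(s)}(b)=0$, and symmetrically for $Q_b$, each with $C^{s-1}(J)$ norm less than $\eta/2$; then $Q=Q_a+Q_b$ works. Alternatively, we will work directly with the full two-point data and use the following rescaling idea.

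The key observation is a scaling of the $s$th derivative. Fix $\lambda\geq1$ large and consider, for a function $\phi$ supported near $a$, the rescaled function $\lambda^{-s}\phi(\lambda(x-a))$. Its $s$th derivative at $a$ is $\phi^{(s)}(0)$, while its $j$th derivative is $\lambda^{j-s}\phi^{(j)}(\lambda(x-a))$, so for $j<s$ its $C^{s-1}$ size is $O(\lambda^{-1})$. Since we need polynomials and not bump functions, we will polynomialize as follows. First, we take the local model $q_a(x)=\frac{A}{s!}(x-a)^s$ near $a$ and $q_b(x)=\frac{B}{s!}(x-b)^s$ near $b$. Then we choose a smooth function $\psi$ on $J$ equal to $q_a$ on $[a,a+\delta]$, equal to $q_b$ on $[b-\delta,b]$, and equal to $0$ in between, using cutoffs $\chi(( x-a)/\delta)$. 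The size of $q_a\chi((x-a)/\delta)$ in $C^{s-1}$ is $O(\delta)$ uniformly, since each derivative of order $j\leq s-1$ of $(x-a)^s\chi((x-a)/\delta)$ is a sum of terms of size $\delta^{s-i}\delta^{-(j-i)}=\delta^{s-j}\leq\delta$. So $\norm{\psi}_{C^{s-1}(J)}<\eta/2$ for small $\delta$, and $\psi$ has exactly the required jets of order $\leq s$ at both endpoints (in fact $\psi$ agrees with a polynomial near each endpoint).

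Next we will approximate $\psi$ by a polynomial while correcting the endpoint jets exactly. By the Weierstrass theorem applied to $\psi^{(N)}$ and integrating $N$ times, we can find a polynomial $P$ with $\norm{P-\psi}_{C^{N}(J)}<\varepsilon$ for any fixed $N$; we take $N=s$. The endpoint jets of $P$ of order $\leq s$ then differ from the target data $(0,\dots,0,A)$, $(0,\dots,0,B)$ by at most $\varepsilon$. We will repair this with Lemma~\ref{lem:hermite} (with $r=s$): there is a polynomial $H$ of degree at most $2s+1$ whose jets at $a,b$ up to order $s$ equal those of $\psi-P$, with $\norm{H}_{C^{s}(J)}\leq C(s,J)\varepsilon$. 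Setting $Q=P+H$ gives exact jets, and
\[
\norm{Q}_{C^{s-1}(J)}\leq\norm{\psi}_{C^{s-1}(J)}+\varepsilon+C(s,J)\varepsilon<\eta
\]
for $\varepsilon$ small.

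The only delicate step is the cutoff estimate, i.e.\ checking that the $C^{s-1}$ norm of $(x-a)^s\chi((x-a)/\delta)$ really tends to $0$ with $\delta$ even though its $s$th derivative stays of order one. This is the Leibniz computation above. Everything else is routine: Weierstrass approximation in $C^s$, and the fixed-constant Hermite correction from Lemma~\ref{lem:hermite}.
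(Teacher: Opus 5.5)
Your proof is correct, but it takes a different route from the paper's. The paper writes the polynomial down directly: on $[0,1]$ it takes $\frac{A}{s!}t^s(1-t)^N+\frac{B}{s!}(t-1)^st^N$ with $N>s$. The endpoint jets are then exact because of the vanishing orders of the factors, and a Leibniz estimate together with $\max_{[0,1]}t^u(1-t)^v\le(u/v)^u$ gives $O(N^{j-s})$ for the $j$th derivative, $j<s$.

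You instead first build a non-polynomial model $\psi$ by cutting off $\frac{A}{s!}(x-a)^s$ and $\frac{B}{s!}(x-b)^s$ at scale $\delta$. Your count $\delta^{s-i}\delta^{-(j-i)}=\delta^{s-j}$ is right, provided $\chi\equiv1$ near $0$ and $\delta$ is small enough that the two cut-off pieces have disjoint supports. You then polynomialize $\psi$. Weierstrass approximation in $C^s(J)$ gives $P$ whose jets through order $s$ are within $\varepsilon$ of those of $\psi$. Lemma~\ref{lem:hermite} with $r=s$ then repairs those jets exactly at cost $C(s,J)\varepsilon$, which is legitimate because that constant does not depend on the data.

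The two arguments share one mechanism. The factor $(1-t)^N$ is a polynomial surrogate for the cutoff $\chi(t/\delta)$ with $\delta\sim1/N$, and the worst-case $C^{s-1}$ rates, $O(\delta)$ and $O(1/N)$, match. The paper's version is explicit: degree $N+s$, a quantitative rate, and no appeal to Weierstrass or bump functions. Yours separates the scaling idea from a reusable principle: any $C^s$ function on $J$ can be replaced by a polynomial with identical endpoint jets through order $s$, at arbitrarily small $C^s$ cost. You lose control of the degree, but nothing downstream needs it.

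Your opening ``by linearity'' reduction is never used and can be dropped.
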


\begin{proof}
Set $t=(x-a)/(b-a)$. It suffices to work on $[0,1]$ with order-$s$
endpoint data $(b-a)^sA$ and $(b-a)^sB$, since the affine change of
variables rescales derivatives only by fixed powers of $b-a$. Thus, after
relabeling the scaled data as $A$ and $B$, let $N>s$ and define
\[
L_N(t)=\frac{A}{s!}t^s(1-t)^N,
\qquad
R_N(t)=\frac{B}{s!}(t-1)^st^N,
\qquad
Q_N=L_N+R_N.
\]
The endpoint factors give exactly the prescribed jets. It remains to show
that $Q_N\to0$ in $C^{s-1}([0,1])$.

Fix $0\leq j<s$. By Leibniz' rule, each term in
$D^j(t^s(1-t)^N)$ is bounded, up to a constant depending only on $s$ and
$j$, by
\[
N^{j-\ell}t^{s-\ell}(1-t)^{N-j+\ell},
\qquad 0\leq\ell\leq j.
\]
For $u,v>0$,
\[
\max_{0\leq t\leq1}t^u(1-t)^v
=\frac{u^uv^v}{(u+v)^{u+v}}
\leq\left(\frac{u}{v}\right)^u.
\]
Taking $u=s-\ell$ and $v=N-j+\ell$ shows that the preceding expression is
$O_{s,j}(N^{j-s})$, uniformly on $[0,1]$, and hence tends to zero since
$j<s$. The same argument applies to $R_N$ after $t\mapsto1-t$. Therefore
\[
\norm{Q_N}_{C^{s-1}([0,1])}\longrightarrow0,
\]
and, after returning to $J$, a sufficiently large $N$ gives the required
polynomial.
\end{proof}

Combining the preceding two lemmas gives the endpoint-sewing step used at each stage of the induction.

\begin{lemma}[Sewing endpoint jets]\label{lem:stitch}
Let $r\geq0$, let $J=[a,b]$ be a nondegenerate compact interval, let $P$ be a polynomial,
and let $\eta>0$. Then one can choose $\delta>0$ such that, whenever real
numbers $\alpha_j,\beta_j$ are prescribed for $0\leq j\leq r+1$ and satisfy
\[
|\alpha_j-P^{(j)}(a)|<\delta,
\qquad
|\beta_j-P^{(j)}(b)|<\delta
\qquad(0\leq j\leq r),
\]
there exists a polynomial $S$ such that
\[
S^{(j)}(a)=\alpha_j,
\qquad
S^{(j)}(b)=\beta_j
\qquad(0\leq j\leq r+1),
\]
and
\[
\norm{S-P}_{C^r(J)}<\eta.
\]
No bound is required on the order-$(r+1)$ data
$\alpha_{r+1},\beta_{r+1}$.
\end{lemma}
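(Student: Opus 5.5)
We write $S=P+H+Q$, where $H$ corrects the low-order jets and $Q$ imposes the order-$(r+1)$ jets, and then check the estimate.

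\emph{Step 1: correct the jets of order $\le r$.} Apply Lemma~\ref{lem:hermite} on $J$ with order $r$ to the discrepancies
\[
\alpha_j-P^{(j)}(a),\qquad \beta_j-P^{(j)}(b)\qquad(0\le j\le r).
\]
This gives a polynomial $H$ of degree at most $2r+1$ with exactly these endpoint jets. Its size is controlled by
\[
\norm{H}_{C^r(J)}\le C(r,J)\,\delta.
\]
Then $P+H$ has the prescribed jets of order $\le r$ at both endpoints.

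\emph{Step 2: impose the order-$(r+1)$ data.} Set
\[
A:=\alpha_{r+1}-(P+H)^{(r+1)}(a),\qquad B:=\beta_{r+1}-(P+H)^{(r+1)}(b).
\]
These quantities are finite but possibly huge, and no bound is needed on them. Apply Lemma~\ref{lem:high-jet} with $s=r+1$, the values $A,B$, and tolerance $\eta/2$. This gives a polynomial $Q$ that vanishes to order $r$ at $a$ and at $b$, has $Q^{(r+1)}(a)=A$ and $Q^{(r+1)}(b)=B$, and satisfies
\[
\norm{Q}_{C^r(J)}<\eta/2.
\]

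\emph{Step 3: conclude.} Set $S:=P+H+Q$. Because $Q$ has zero jets of order $\le r$ at $a$ and $b$, the jets of $S$ of order $\le r$ are those of $P+H$, which are prescribed. The order-$(r+1)$ jets of $S$ equal $\alpha_{r+1}$ and $\beta_{r+1}$ by the choice of $A$ and $B$. Choosing
\[
\delta:=\frac{\eta}{2C(r,J)}
\]
gives
\[
\norm{S-P}_{C^r(J)}\le\norm{H}_{C^r(J)}+\norm{Q}_{C^r(J)}<\eta.
\]

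\emph{The point needing care.} The constant $\delta$ must depend only on $r$, $J$, $P$ and $\eta$, and not on the order-$(r+1)$ data. This holds because $\delta$ comes from the Hermite constant $C(r,J)$ alone. The unbounded data $A,B$ enter only through Lemma~\ref{lem:high-jet}, whose $C^{s-1}$ smallness holds for arbitrary $A,B$.
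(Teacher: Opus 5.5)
Your proposal is correct and matches the paper's proof essentially step for step: a Hermite correction $H$ from Lemma~\ref{lem:hermite} with $\norm{H}_{C^r(J)}\le C\delta$, then a high-jet correction $Q$ from Lemma~\ref{lem:high-jet} with $s=r+1$ and tolerance $\eta/2$, and finally $S=P+H+Q$ with $\delta$ chosen so that $C\delta\le\eta/2$. You also correctly note that $\delta$ depends only on $C(r,J)$ and $\eta$, not on the order-$(r+1)$ data; this is exactly the uniformity the sewing induction relies on.
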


\begin{proof}
Apply Lemma~\ref{lem:hermite} to the endpoint errors
\[
u_j=\alpha_j-P^{(j)}(a),
\qquad
v_j=\beta_j-P^{(j)}(b)
\qquad(0\leq j\leq r).
\]
This yields a polynomial $H$ realizing these lower-order jets and satisfying
\[
\norm{H}_{C^r(J)}\leq C\delta.
\]
Choose $\delta>0$ so that $C\delta<\eta/2$, and set
\begin{align*}
A&=\alpha_{r+1}-P^{(r+1)}(a)-H^{(r+1)}(a),\\
B&=\beta_{r+1}-P^{(r+1)}(b)-H^{(r+1)}(b).
\end{align*}
By Lemma~\ref{lem:high-jet}, with $s=r+1$, there is a polynomial $Q$ whose jets of order at most $r$ vanish at both endpoints, whose order-$(r+1)$ endpoint jets are $A$ and $B$, and such that
\[
\norm{Q}_{C^r(J)}<\eta/2.
\]
Then $S=P+H+Q$ has all the prescribed endpoint jets and
\[
\norm{S-P}_{C^r(J)}
\leq \norm{H}_{C^r(J)}+\norm{Q}_{C^r(J)}
<\eta.
\]
\end{proof}

The only arithmetic ingredient needed in the sewing step is the density of the coefficient field. Throughout this section, we adopt the convention $\deg 0=-\infty$.

\begin{lemma}[Approximation over a dense subfield]\label{lem:dense-field}
Let $F\subset\R$ be a dense subfield, let $P\in\R[x]$, let $J$ be a compact
interval, let $r,N_0\geq0$, and let $\delta>0$. Then, for every integer
\[
N>\max\{\deg P,N_0\},
\]
there exists a polynomial $Q\in F[x]$ of degree exactly $N$ such that
\[
\norm{Q-P}_{C^r(J)}<\delta.
\]
In particular, one may choose $Q$ with $\deg Q\geq N_0$.
\end{lemma}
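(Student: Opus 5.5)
The plan is to perturb the coefficients of $P$ inside the finite-dimensional space $\mathcal P_N$ of real polynomials of degree at most $N$, using that $F$ is dense and that the $C^r(J)$ norm is continuous on $\mathcal P_N$. Fix $N>\max\{\deg P,N_0\}$ and write $P(x)=\sum_{k=0}^{N}p_kx^k$ with $p_k\in\R$; since $N>\deg P$, we have $p_N=0$. Because $\mathcal P_N$ is finite dimensional, the coefficient map $\R^{N+1}\to\mathcal P_N$, $(c_k)\mapsto\sum c_kx^k$, is continuous into $(\mathcal P_N,\norm{\cdot}_{C^r(J)})$. Concretely, one can use the crude explicit bound
\[
\norm{\textstyle\sum_k c_kx^k}_{C^r(J)}\leq M\max_k|c_k|,
\qquad
M:=\sum_{k=0}^N\max_{0\leq j\leq r}\sup_{x\in J}\abs{D^j x^k},
\]
with $M<\infty$ since $J$ is compact. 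This gives a $\rho>0$ such that $\max_k|c_k|<\rho$ implies $\norm{\sum c_kx^k}_{C^r(J)}<\delta$. Take $\rho=\delta/(M+1)$.

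Next, choose the coefficients. By density of $F$ in $\R$, I pick $q_k\in F$ with $|q_k-p_k|<\rho$ for $0\leq k\leq N-1$. For the top coefficient I pick $q_N\in F$ with $0<|q_N|<\rho$; this is possible because the interval $(0,\rho)$ is open and nonempty, so it meets the dense set $F$. Then I set $Q=\sum_{k=0}^N q_kx^k\in F[x]$. Since $q_N\neq0$, the degree of $Q$ is exactly $N$. Moreover $Q-P=\sum(q_k-p_k)x^k$ has all coefficients of size less than $\rho$, using $p_N=0$ for the top term. Hence $\norm{Q-P}_{C^r(J)}<\delta$. The final sentence of the lemma then follows by taking $N=\max\{\deg P,N_0\}+1$, with the convention $\deg0=-\infty$ handling $P=0$ (then $N=N_0+1$ or any larger $N\geq 1$ works, and the argument is unchanged).

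There is no real obstacle here. The one point needing care is forcing the degree to be exactly $N$: the top coefficient must be a nonzero element of $F$ that is small, not merely an approximation of $p_N=0$, which could be $0$ itself. The strict inequality $N>\deg P$ is exactly what makes a small nonzero leading coefficient a small perturbation.
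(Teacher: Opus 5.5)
Your proof is correct and matches the paper's argument: both approximate the coefficients of $P$ in degree at most $N-1$ by elements of $F$, then add a small nonzero top term $cx^N$ with $c\in F$ to force degree exactly $N$. Your explicit constant $M$ simply makes quantitative the paper's appeal to ``coefficient approximation'' in a finite-dimensional space.
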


\begin{proof}
Fix an integer $N>\max\{\deg P,N_0\}$. Since the space of polynomials of degree at most $N-1$ is
finite-dimensional and $N>\deg P$, coefficient approximation gives
a polynomial $\widetilde Q\in F[x]$ of degree at most $N-1$ such that
\[
\|\widetilde Q-P\|_{C^r(J)}<\frac{\delta}{2}.
\]
Choose $0\neq c\in F$ so small that
\[
\|cx^N\|_{C^r(J)}<\frac{\delta}{2}.
\]
Then
\[
Q:=\widetilde Q+cx^N\in F[x]
\]
has degree exactly $N$ and
\[
\|Q-P\|_{C^r(J)}
\leq
\|\widetilde Q-P\|_{C^r(J)}
+\|cx^N\|_{C^r(J)}
<\delta.
\]
\end{proof}

\subsection*{The complementary intervals}

Let $E\subset\R$ be nonempty, compact, perfect, and nowhere dense, and set
\[
a:=\min E,
\qquad
b:=\max E.
\]
The connected components of $[a,b]\setminus E$ are pairwise disjoint open
intervals. Since each contains a rational number, they form a countable
family, which we enumerate as
\[
[a,b]\setminus E=\bigcup_{j\geq1}U_j.
\]
No two distinct gaps $U_j$ share an endpoint, since a common endpoint would
be isolated in $E$. For the same reason, no gap has $a$ or $b$ as an
endpoint.

Set $\mathcal J_0:=\{[a,b]\}$. After removing the first $n$ gaps, the
remaining set is the union of $n+1$ pairwise disjoint nondegenerate compact
intervals:
\begin{equation}\label{eq:Jn}
[a,b]\setminus\bigcup_{j=1}^n U_j
=\bigcup_{J\in\mathcal J_n}J,
\qquad
|\mathcal J_n|=n+1.
\end{equation}
Moreover, $U_{n+1}$ lies in the interior of a unique member of
$\mathcal J_n$ and splits it into two nondegenerate compact intervals.

We shall use the fact that gaps of arbitrarily large index occur near every
point of $E$.

\begin{lemma}[Local abundance of gaps]\label{lem:gaps}
Let $x\in E$, let $V$ be an open interval containing $x$, and let $N\geq1$.
Then there exists $j\geq N$ such that
\[
\overline{U_j}\subset V.
\]
In fact, $V$ contains the closures of infinitely many complementary
components of $\R\setminus E$.
\end{lemma}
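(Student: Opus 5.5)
The plan is to first show that $V$ contains the closure of at least one gap, and then bootstrap to infinitely many; once we have infinitely many, only finitely many of them can have index $<N$, so some $j\geq N$ works. Throughout, we use only that $E$ is compact, perfect and nowhere dense, together with the facts recorded above: the gaps of $[a,b]\setminus E$ are exactly the $U_j$, and no gap has $a$ or $b$ as an endpoint.

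\textbf{One gap inside $V$.} Since $E$ is perfect, $x$ is not isolated, so there is a point $y\in E\cap V$ with $y\neq x$. Say $x<y$; the other case is symmetric. Then $[x,y]\subset V\cap[a,b]$. Because $E$ is nowhere dense, the open interval $(x,y)$ contains a point $z\notin E$. This $z$ lies in $[a,b]\setminus E$, so it belongs to some gap $U_j$. Both $x$ and $y$ lie in $E$, so the connected component $U_j$ of $z$ in $\R\setminus E$ cannot extend past either of them. Hence $U_j\subset(x,y)$, its endpoints lie in $[x,y]$, and $\overline{U_j}\subset[x,y]\subset V$.

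\textbf{Infinitely many gaps inside $V$.} We iterate this argument on disjoint subintervals. The point $x$ is a limit of points of $E$, so we may pick points $y_1,y_2,\dots$ of $E\cap V$, all distinct from $x$. Passing to a subsequence, we may assume they are monotone and converge to $x$, say from the right, so that $y_1>y_2>\cdots>x$. The intervals $(y_{k+1},y_k)$ are pairwise disjoint, lie in $V$, and have both endpoints in $E$. Applying the previous step to each of them gives a gap $U_{j_k}$ with $\overline{U_{j_k}}\subset[y_{k+1},y_k]\subset V$. These gaps are pairwise distinct, because the open intervals $(y_{k+1},y_k)$ are disjoint. Since the indices $j_k$ are distinct, only finitely many are $<N$, so some $j_k\geq N$.

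\textbf{Expected obstacle.} The only subtle point is the bookkeeping behind "each gap is a component of $[a,b]\setminus E$", that is, making sure a component found inside $(x,y)$ really is one of the enumerated $U_j$ and not an unbounded component of $\R\setminus E$. This is guaranteed because its endpoints lie between $x$ and $y$, hence in $[a,b]$.
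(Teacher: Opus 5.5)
Your proof is correct, but it runs in the opposite direction from the paper's. The paper argues by contradiction. It uses nowhere density to pick $u<x<v$ outside $E$ with $[u,v]\subset V$. If only finitely many components of $\R\setminus E$ had closure in $V$, then $[u,v]\setminus E$ would be a finite union of intervals. Hence $[u,v]\cap E$ would be a finite union of closed intervals, necessarily degenerate because $E$ has empty interior, so it would be finite and $x$ would be isolated.

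You instead build the gaps explicitly. Perfectness supplies a strictly monotone sequence $y_k\in E\cap V$ converging to $x$. Nowhere density then places a point of $\R\setminus E$, and hence a whole gap, inside each of the pairwise disjoint intervals $(y_{k+1},y_k)$.

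The paper's version is shorter and avoids extracting sequences. It does, however, leave implicit that the infinitely many components it finds are, apart from at most the two unbounded ones, among the enumerated $U_j$. Your construction makes this automatic, since each gap lies in $(y_{k+1},y_k)\subset[a,b]$. It also exhibits the gaps as accumulating at $x$.
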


\begin{proof}
Choose $u,v\in\R\setminus E$ such that
\[
u<x<v,
\qquad
[u,v]\subset V.
\]
This is possible because $E$ is nowhere dense.

Suppose, to the contrary, that only finitely many components of
$\R\setminus E$ have closure contained in $V$.  Every component of
$\R\setminus E$ meeting $[u,v]$ is then either one of these finitely
many components or one of the two components containing $u$ and $v$.
Hence $[u,v]\setminus E$ is a finite union of intervals.

It follows that $[u,v]\cap E$ is a finite union of closed intervals.
Since $E$ has empty interior, each of these intervals is degenerate.
Thus $[u,v]\cap E$ is finite.

But $x\in(u,v)\cap E$, so this would make $x$ an isolated point of $E$,
contrary to the fact that $E$ is perfect.  Therefore every neighborhood
of $x$ contains the closures of infinitely many complementary gaps.

Since only finitely many gaps have index less than $N$, one of these
gaps is $U_j$ for some $j\ge N$, and hence
\[
\overline{U_j}\subset V.
\]
\end{proof}

\subsection*{The arithmetic K\"orner theorem}

We can now assemble the interpolation lemmas into the main technical result of the paper. Besides its application to Liouville numbers below, the theorem gives an independent arithmetic refinement of K\"orner's construction by allowing the local polynomial pieces to lie in any prescribed dense subfield.

\begin{theorem}[Arithmetic polynomial sewing]\label{thm:sewing}
Let $E\subset\R$ be nonempty, compact, perfect, and nowhere dense, and let
$F\subset\R$ be a dense subfield. Then there exists a function
$g\in C_c^\infty(\R)$ such that
\begin{enumerate}[label=\textup{(\roman*)}]
\item for every connected component $I$ of $\R\setminus E$, there exists
$P_I\in F[x]$ such that
\[
g|_I=P_I|_I;
\]
\item $g$ is real-analytic on $\R\setminus E$;
\item $g$ is not real-analytic at any point of $E$.
\end{enumerate}
Consequently,
\[
\NA(g)=E.
\]
\end{theorem}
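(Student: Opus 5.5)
We build $g$ as a limit of piecewise-polynomial $C^\infty$ functions $g_n$, following Körner's sewing scheme with the arithmetic refinement described in the introduction. Set $g\equiv0$ on the two unbounded components $(-\infty,a)$ and $(b,\infty)$; the polynomial $0$ lies in $F[x]$.

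\textbf{Inductive data.} At stage $n$ we keep:
\begin{itemize}
\item polynomials $P_1,\dots,P_n\in F[x]$, already frozen, assigned to the gaps $U_1,\dots,U_n$;
\item for each interval $J\in\mathcal J_n$, a provisional real polynomial $S_J$.
\end{itemize}
These define $g_n$ piecewise on $[a,b]$, with $g_n=0$ outside $[a,b]$. We maintain the invariant that $g_n$ is $C^{n}$ (indeed $C^{r_n}$ for an increasing sequence $r_n\to\infty$). Concretely, at each endpoint shared by a frozen gap and a neighbouring interval of $\mathcal J_n$, the jets agree up to order $r_n$. At $a$ and $b$, the jets of $S_J$ vanish up to order $r_n$.

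\textbf{Inductive step.} Let $J\in\mathcal J_n$ be the interval containing $U_{n+1}$, so that $U_{n+1}$ splits it into $J'\cup U_{n+1}\cup J''$.
\begin{enumerate}
\item Use Lemma~\ref{lem:dense-field} to choose $P_{n+1}\in F[x]$ with $\norm{P_{n+1}-S_J}_{C^{r_n}(\overline{U_{n+1}})}<\delta_n$ and $\deg P_{n+1}\geq n$. This freezes $P_{n+1}$ once and for all.
\item The jet discrepancy of $P_{n+1}$ at $\partial U_{n+1}$ is small up to order $r_n$ and arbitrary at order $r_n+1$. Lemma~\ref{lem:stitch}, applied on $J'$ and on $J''$ with base polynomial $S_J$, absorbs it. This produces $S_{J'},S_{J''}$ matching the frozen jets up to order $r_n+1$ at the old endpoints, at the new endpoints, and at $a,b$. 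It also gives $\norm{S_{J'}-S_J}_{C^{r_n}(J')}<\eta_n$.
\item On every other interval of $\mathcal J_n$, also raise the matching order by one using Lemma~\ref{lem:stitch}, with the same $C^{r_n}$ smallness.
\end{enumerate}
This yields $r_{n+1}=r_n+1$. Choosing $\sum\eta_n$ summable in each fixed $C^r$ norm makes $g_n\to g$ in every $C^r$ on $\R$.

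\textbf{Regularity and property (i).} Off $E$, every point eventually lies in a frozen gap, so $g|_{U_j}=P_j\in F[x]$. This gives (i) and (ii).

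The limit is $C^\infty$ across $E$, and this is where care is needed. Every derivative $g_n^{(k)}$ with $k\leq r_m$ is continuous for $n\geq m$ and converges uniformly. So $g\in C^\infty(\R)$, and $g$ is compactly supported in $[a,b]$.

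\textbf{Non-analyticity on $E$ (main obstacle).} This is (iii), and we build it into the choices $\deg P_{n+1}\geq N_n$ and the sizes $\delta_n,\eta_n$.

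Suppose $g$ were analytic on an interval $V$ around $x\in E$. Lemma~\ref{lem:gaps} gives infinitely many gaps $U_j\subset V$. Since $g$ agrees with $P_j$ on the open set $U_j$, analytic continuation gives $g=P_j$ on all of $V$. Hence all these $P_j$ coincide, which contradicts a strictly increasing degree sequence.

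The subtle point is that the later smallness conditions do not interfere with this. Degrees are fixed by construction, so $\deg P_j\to\infty$ regardless of the approximation steps. It remains to check that the sewing steps never alter already frozen polynomials, which holds because Lemma~\ref{lem:stitch} only changes the provisional $S_J$.

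Finally, $E\subseteq\NA(g)$ together with (ii) gives $\NA(g)=E$.
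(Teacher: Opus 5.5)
Your proposal is correct and follows essentially the same route as the paper. You freeze an $F$-polynomial on the newest gap close to the current piece in the already-controlled $C^{r_n}$ norm, use Lemma~\ref{lem:stitch} on every residual interval to raise the matching order by one, pass to the limit, and derive non-analyticity from the identity theorem together with the degrees.

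There are two small pieces of looseness. First, $\delta_n$ should be the stitching threshold, chosen before $P_{n+1}$, with $\delta_n\leq\eta_n$. Second, $\deg P_{n+1}\geq n$ only gives $\deg P_j\to\infty$, not the strictly increasing degrees you later invoke. This causes no harm: since Lemma~\ref{lem:gaps} supplies infinitely many gaps inside $V$, degrees tending to infinity already give the contradiction, whereas the paper imposes strictly increasing degrees and needs only two gaps.
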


\begin{proof}
Write $a=\min E$ and $b=\max E$, and enumerate the bounded gaps as
$U_1,U_2,\dots$. We construct functions $g_n$ and polynomials
$Q_1,\dots,Q_n\in F[x]$ inductively so that
\begin{enumerate}[label=\textup{(A\arabic*)},leftmargin=3.1em]
\item $g_n\in C^n(\R)$ and $g_n=0$ on $\R\setminus[a,b]$;
\item $g_n|_{U_j}=Q_j|_{U_j}$ for $1\leq j\leq n$;
\item for every $J\in\mathcal J_n$, the restriction $g_n|_J$ is a real polynomial;
\item $\deg Q_1<\cdots<\deg Q_n$;
\item
\begin{equation}\label{eq:increment}
\norm{g_{n+1}-g_n}_{C^n([a,b])}\leq2^{-n-2}
\qquad(n\geq0).
\end{equation}
\end{enumerate}
For $n=0$, condition \textup{(A4)} is vacuous, and we set $g_0=0$.

Assume that $g_n$ has been constructed. Write
\[
U_{n+1}=(c,d).
\]
This gap lies in the interior of a unique interval
$J_*\in\mathcal J_n$, and on $J_*$ we have $g_n=P_*$ for some real
polynomial $P_*$. Set
\[
\eta_n:=2^{-n-2}.
\]
We shall choose the polynomial $Q_{n+1}\in F[x]$ to be assigned to the new gap
$U_{n+1}$, close to $P_*$ through order $n$ on $[c,d]$, while requiring its
degree to exceed those of all previously assigned gap polynomials.

Let $K=[u,v]\in\mathcal J_{n+1}$, let $J\in\mathcal J_n$ be its unique
parent, and write $g_n|_J=P_J$. The polynomial to be placed on $K$ must
match, through order $n+1$, the piece lying across each endpoint. That
neighboring piece is one of:
\begin{itemize}
\item the zero polynomial, at $a$ or $b$;
\item a previously assigned polynomial $Q_j$, at an old gap endpoint;
\item the new polynomial $Q_{n+1}$, at $c$ or $d$.
\end{itemize}
Denote the corresponding target jets at $u$ and $v$ by
\[
\alpha_0,\dots,\alpha_{n+1},
\qquad
\beta_0,\dots,\beta_{n+1}.
\]
At every old endpoint, the fact that $g_n\in C^n(\R)$ gives
\[
\alpha_j=P_J^{(j)}(u),
\qquad
\beta_j=P_J^{(j)}(v)
\qquad(0\leq j\leq n).
\]
At the new endpoints $c$ and $d$, the discrepancies in these lower-order
jets are bounded by
\[
\norm{Q_{n+1}-P_*}_{C^n([c,d])}.
\]

Apply Lemma~\ref{lem:stitch}, with $r=n$ and tolerance $\eta_n$, to each
of the finitely many intervals in $\mathcal J_{n+1}$. The corresponding
threshold depends only on the parent polynomial, the interval, and
$\eta_n$, and, crucially, not on the prescribed order-$(n+1)$ endpoint
data. Taking the minimum of these finitely many thresholds gives
$\delta_n>0$ such that all the required stitchings are possible whenever
\begin{equation}\label{eq:Qclose}
\norm{Q_{n+1}-P_*}_{C^n([c,d])}
<\min\{\delta_n,\eta_n\}.
\end{equation}
By Lemma~\ref{lem:dense-field}, choose $Q_{n+1}\in F[x]$ satisfying
\eqref{eq:Qclose}, with degree greater than $\deg P_*$ and $n+1$, and greater than the degree of every previously assigned polynomial $Q_j$.

For each $K\in\mathcal J_{n+1}$, Lemma~\ref{lem:stitch} now yields a real
polynomial $S_K$ having exactly the prescribed endpoint jets through order
$n+1$ and satisfying
\[
\norm{S_K-P_J}_{C^n(K)}<\eta_n,
\]
where $J$ is the parent of $K$. Define
\[
g_{n+1}(x)=
\begin{cases}
0, & x\notin[a,b],\\
Q_j(x), & x\in U_j,\quad 1\leq j\leq n+1,\\
S_K(x), & x\in K,\quad K\in\mathcal J_{n+1}.
\end{cases}
\]
The endpoint jets agree through order $n+1$, hence
$g_{n+1}\in C^{n+1}(\R)$. The old gap pieces are unchanged; on the new
gap, \eqref{eq:Qclose} controls $g_{n+1}-g_n$; and on each residual
interval, the same control follows from the stitching estimate. Therefore
\eqref{eq:increment} holds, and the induction is complete.

Fix $r\geq0$. For $n\geq r$, \eqref{eq:increment} together with the fact that $g_{n+1}-g_n$ vanishes outside $[a,b]$, gives
\[
\norm{g_{n+1}^{(r)}-g_n^{(r)}}_{L^\infty(\R)}=\norm{g_{n+1}^{(r)}-g_n^{(r)}}_{L^\infty([a,b])}
\leq2^{-n-2}.
\]
Thus $(g_n^{(r)})_{n\ge r}$ is uniformly Cauchy on $\mathbb R$; denote its limit by $h_r$. On every compact interval, the tails
\[
(g_n^{(r)})_{n\ge r+1}
\qquad\text{and}\qquad
(g_n^{(r+1)})_{n\ge r+1}
\]
converge uniformly to $h_r$ and $h_{r+1}$, respectively. Moreover,
\[
\bigl(g_n^{(r)}\bigr)'=g_n^{(r+1)}
\qquad (n\ge r+1).
\]
The standard theorem on uniform convergence of derivatives therefore gives
\[
h_{r+1}=h_r'.
\]
Induction on $r$ shows that $g:=h_0$ belongs to $C^\infty(\R)$ and that
$g^{(r)}=h_r$ for every $r\geq0$. Since every $g_n$ vanishes outside
$[a,b]$, the same is true of $g$, so $g\in C_c^\infty(\R)$.

Fix a gap $U_j$. The polynomial $Q_j$ assigned to $U_j$ is left unchanged from stage $j$ onward,
and therefore
\[
g|_{U_j}=Q_j|_{U_j},
\qquad
Q_j\in F[x].
\]
On the two unbounded components of $\R\setminus E$, the function $g$ is
identically zero. This proves \textup{(i)} and shows that $g$ is
real-analytic on $\R\setminus E$.

It remains to prove non-analyticity on $E$. Suppose, to the contrary,
that $g$ is real-analytic at some $x\in E$. Then $g$ is real-analytic on
some connected open interval $V\ni x$. By Lemma~\ref{lem:gaps}, there
exist two distinct gaps $U_j,U_k$ whose closures are contained in $V$.
Since $g=Q_j$ on $U_j$, the real-analytic identity theorem gives
$g=Q_j$ throughout $V$. In particular, $Q_j=Q_k$ on $U_k$, and hence
$Q_j=Q_k$ as polynomials. This contradicts
$\deg Q_j\neq\deg Q_k$. Thus $g$ is not real-analytic at any point of
$E$, proving \textup{(iii)} and completing the proof.
\end{proof}

The coefficient restriction in Theorem~\ref{thm:sewing} already yields a useful arithmetic preservation property.

\begin{corollary}[Arithmetic preservation]\label{cor:sewing-arithmetic}
Under the hypotheses of Theorem~\ref{thm:sewing}, suppose in addition that
$E\cap F=\varnothing$. Then
\[
D^m g(F)\subseteq F
\qquad (m\geq0).
\]
In particular, if $F=\Q$ and $E\cap\Ralg=\varnothing$, then for every real
number field $K\subset\R$,
\[
D^m g(K)\subseteq K
\qquad (m\geq0).
\]
\end{corollary}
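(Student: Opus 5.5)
The plan is a pointwise case analysis based on where $x\in F$ lies relative to $E$. Since $E\cap F=\varnothing$, every $x\in F$ lies in some connected component $I$ of $\R\setminus E$. This component is open, so $x$ has a neighborhood inside $I$. By Theorem~\ref{thm:sewing}(i), $g|_I=P_I|_I$ with $P_I\in F[x]$. Because $I$ is open and $g$ coincides with $P_I$ on it, the derivatives also coincide there: $D^m g(x)=P_I^{(m)}(x)$ for every $m\geq0$. The polynomial $P_I^{(m)}$ again has coefficients in $F$, since differentiation multiplies coefficients by positive integers and $F\supseteq\Q$. Evaluating a polynomial in $F[x]$ at a point of $F$ gives an element of $F$. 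Hence $D^m g(F)\subseteq F$. For the two unbounded components we have $P_I=0$, which causes no difficulty.

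For the second assertion, take $F=\Q$, which is a dense subfield. Suppose $E\cap\Ralg=\varnothing$, and let $K\subset\R$ be a real number field. Then $K\subseteq\Ralg$, so $K\cap E=\varnothing$. We do not need $K$ to be dense for this. We simply repeat the pointwise argument: each $x\in K$ lies in an open component $I$ of $\R\setminus E$. There $D^m g(x)=P_I^{(m)}(x)$ with $P_I^{(m)}\in\Q[x]\subseteq K[x]$, so $D^m g(x)\in K$. Note that Theorem~\ref{thm:sewing} is applied once with $F=\Q$. The function $g$ obtained this way serves all number fields $K$ simultaneously, because the argument only uses $\Q\subseteq K$ and $K\cap E=\varnothing$.

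There is no real obstacle here. The one point needing care is that the local identity $g=P_I$ on the open set $I$ transfers to all derivatives at $x$. This holds precisely because $x$ lies in the open set $I$ rather than on $E$, and it is the reason the hypothesis $E\cap F=\varnothing$ is indispensable.
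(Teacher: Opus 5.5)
Your proposal is correct and follows the paper's proof essentially verbatim: each point of $F$ (respectively of $K\subseteq\Ralg$) avoids $E$, so it lies in an open component $I$ on which $g=P_I$, and evaluating $P_I^{(m)}$, which has coefficients in $F$ (respectively in $\Q$), gives $D^m g(\alpha)\in F$ (respectively $\in K$). Your remarks that $P_I^{(m)}$ keeps its coefficients in $F$ and that $K$ need not be dense are correct details the paper leaves implicit.
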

\begin{proof}
If $\alpha\in F$, then $\alpha\notin E$, so $\alpha$ belongs to a component
$I$ of $\R\setminus E$ on which $g=P_I$ for some $P_I\in F[x]$. Hence
\[
D^m g(\alpha)=P_I^{(m)}(\alpha)\in F.
\]
For the final assertion, if $\alpha\in K$, then $\alpha\in\Ralg$ and hence
$\alpha\notin E$; since $P_I\in\Q[x]$, the same argument gives
$D^m g(\alpha)\in K$.
\end{proof}

\begin{remark}\label{rem:korner}
Theorem~\ref{thm:sewing} is not a formal restatement of Körner's theorem.  The dense-field condition is arithmetic.  Its viability rests on an asymmetric induction: at each stage, only the polynomial assigned to the newest gap is fixed permanently.  Its $F$-polynomial can approximate the previous real piece in all derivatives already controlled, while the remaining residual intervals absorb the new highest-order mismatch by Lemma~\ref{lem:high-jet}.
\end{remark}

\section{Anchored rational germs and arithmetic dynamics}\label{sec:germs}

The polynomial sewing theorem provides exact local arithmetic models.
To use them along iterates, however, their rational extensions must remain
nonconstant under composition and differentiation. A finite complex pole
provides a convenient and robust certificate of this property.

\begin{definition}
A rational function $R\in\Q(z)$ will be called \emph{anchored} if
\[
 R(\infty)=\infty
\]
on the Riemann sphere and $R$ has at least one finite pole.
\end{definition}

The usefulness of this notion is that anchoredness is preserved under
composition, while differentiation preserves a finite pole and hence
nonconstancy.

\begin{lemma}[Persistence of finite poles]\label{lem:anchors}
Let $R_1,\dots,R_n\in\Q(z)$ be anchored.  Then
\[
 S=R_n\circ\cdots\circ R_1
\]
is anchored.  Moreover, for every $m\geq0$, the rational derivative $S^{(m)}$ has a finite pole and is nonconstant.
\end{lemma}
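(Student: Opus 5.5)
We argue in two stages: first the composition, then its derivatives.

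\emph{Anchoredness of the composition.} We view each $R_i$ as a self-map of the Riemann sphere $\widehat{\C}$. Since $R_i(\infty)=\infty$ for every $i$, we get $S(\infty)=\infty$ directly. It remains to exhibit a finite pole of $S$, and we do this by induction on $n$. The case $n=1$ is the hypothesis. Suppose $T=R_{n-1}\circ\cdots\circ R_1$ is anchored, with a finite pole $p$, so $T(p)=\infty$. Then
\[
S(p)=R_n(T(p))=R_n(\infty)=\infty,
\]
so $p$ is a finite pole of $S$. Before this argument is legitimate, we must check that $T$ is nonconstant, so that the composition with $R_n$ is a genuine rational function. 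This holds because $T$ has a pole and $T(\infty)=\infty$; a constant map cannot take the value $\infty$ at one point of $\C$ while also being the value at $\infty$ unless it is identically $\infty$, which is not an element of $\Q(z)$. In fact, $R(\infty)=\infty$ already forces $\deg R\ge1$, so each $R_i$ is nonconstant. The coefficients of $S$ stay in $\Q$ because composition preserves $\Q(z)$.

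\emph{Derivatives.} Let $p\in\C$ be a finite pole of $S$ of order $k\ge1$. Near $p$ we write
\[
S(z)=(z-p)^{-k}h(z),
\]
where $h$ is holomorphic at $p$ and $h(p)\neq0$. Differentiating $m$ times gives
\[
S^{(m)}(z)=(-k)(-k-1)\cdots(-k-m+1)\,(z-p)^{-k-m}h(p)+O\bigl(|z-p|^{-k-m+1}\bigr).
\]
The leading coefficient is nonzero, so $S^{(m)}$ has a pole of order exactly $k+m$ at $p$. Equivalently, one can read this off the Laurent expansion at $p$: differentiation sends the principal part $c_{-k}(z-p)^{-k}$ to a nonzero multiple of $(z-p)^{-k-1}$. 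A rational function with a finite pole is unbounded near that pole and hence nonconstant. This gives the claim for every $m\ge0$, with the case $m=0$ being $S$ itself.

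\emph{Main obstacle.} The argument is routine, and the only delicate point is the bookkeeping on $\widehat{\C}$. One must make sure that the finite pole of $R_1$ persists through the later maps. Anchoredness at $\infty$ is exactly what guarantees this, since each later map sends $\infty$ to $\infty$. No cancellation can occur, because we only evaluate the composition at a single point.
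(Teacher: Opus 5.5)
Your proof is correct. The derivative step is the same leading-term (Laurent) computation the paper uses, but your finite-pole step runs in the opposite direction.

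The paper takes a finite pole $w$ of the \emph{outer} map $R_n$ and pulls it back. Since $T=R_{n-1}\circ\cdots\circ R_1$ is nonconstant, it is surjective on $\widehat{\C}$, and $T(\infty)=\infty\neq w$ forces any $z$ with $T(z)=w$ to be finite. You instead push a finite pole $p$ of the \emph{inner} map forward, using only $R_n(\infty)=\infty$.

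Your route avoids the surjectivity of nonconstant rational maps. Both arguments still rely on the composite self-map of $\widehat{\C}$ being given by the rational function $S$. Your route also gives more precise information: unwinding your induction, every finite pole of $R_1$ is a pole of $S$, and the finite poles of $R_2,\dots,R_n$ are never used. In the application, every local model has finite poles exactly $\pm i$ (Lemma~\ref{lem:model}). Your argument then shows that $\pm i$ themselves are poles of every composite, which makes the paper's ``persistent certificate'' literal.

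The paper's argument, symmetrically, uses only the finite pole of the last map $R_n$. Its induction hypothesis enters only through $T(\infty)=\infty$ and nonconstancy, and it locates the pole somewhere in $T^{-1}(w)$. Either version suffices for the nonconstancy of all $S^{(m)}$.
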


\begin{proof}
Clearly $S\in\Q(z)$ and $S(\infty)=\infty$.  We prove the finite-pole assertion by induction on $n$.  For $n=1$ it is part of the definition.  Write
\[
T=R_{n-1}\circ\cdots\circ R_1.
\]
Since each $R_j$ is anchored, it is nonconstant; hence $T$ is a
nonconstant rational function. Therefore
\[
T:\widehat{\C}\to\widehat{\C}
\]
is surjective. Let $w\in\C$ be a finite pole of $R_n$. Choose $z\in\widehat\C$ with $T(z)=w$.  Since $T(\infty)=\infty\neq w$, the point $z$ is finite.  Hence $S(z)=\infty$, so $z$ is a finite pole of $S$.

If $S$ has a pole of order $r\geq1$ at $z_0$, its Laurent expansion begins with
\[
 c(z-z_0)^{-r},\qquad c\neq0.
\]
The $m$th derivative begins with
\[
 c(-1)^m r(r+1)\cdots(r+m-1)(z-z_0)^{-r-m}
\]
(with the usual interpretation for $m=0$).  Thus every $S^{(m)}$ has a pole at $z_0$ and cannot be constant.
\end{proof}

The following rational perturbation provides a common pair of finite poles for all local models.

\begin{lemma}[A universal anchored local model]\label{lem:model}
Let $P\in\Q[z]$ and let $\varepsilon\in\Q\setminus\{0\}$. Define
\[
R_{P,\varepsilon}(z)
:=z+\varepsilon\left(P(z)+\frac1{1+z^2}\right).
\]
Then $R_{P,\varepsilon}\in\Q(z)$ and its finite poles are precisely $i$ and $-i$.
Moreover, if $z+\varepsilon P(z)$ is nonconstant, then $R_{P,\varepsilon}$ is anchored.
\end{lemma}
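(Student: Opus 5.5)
Three things must be checked: $R_{P,\varepsilon}\in\Q(z)$, the finite poles of $R_{P,\varepsilon}$ are exactly $\pm i$, and anchoredness under the nonconstancy hypothesis. The first is immediate: $z$, $P$ and $1/(1+z^2)$ lie in $\Q(z)$, $\varepsilon\in\Q$, and $\Q(z)$ is a field.

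For the poles, I will put everything over a common denominator:
\[
R_{P,\varepsilon}(z)=\frac{(1+z^2)\bigl(z+\varepsilon P(z)\bigr)+\varepsilon}{1+z^2}.
\]
A finite pole can only occur where $1+z^2=0$, i.e.\ at $z=\pm i$. It remains to see that neither is cancelled. At $z=\pm i$ the numerator equals $0\cdot(\pm i+\varepsilon P(\pm i))+\varepsilon=\varepsilon\neq0$. The numerator and denominator are therefore coprime, and both $i$ and $-i$ are genuine simple poles. Equivalently, $z+\varepsilon P(z)$ is a polynomial, hence holomorphic at $\pm i$, while $\varepsilon/(1+z^2)$ has simple poles there with residues $\mp\varepsilon i/2\neq0$. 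This is the only point requiring care: one needs $\varepsilon\neq0$ to rule out cancellation.

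For anchoredness, the finite-pole condition was just established, so only $R_{P,\varepsilon}(\infty)=\infty$ must be checked. Write $R_{P,\varepsilon}=G+\varepsilon/(1+z^2)$ with $G(z)=z+\varepsilon P(z)\in\Q[z]$. As $z\to\infty$, the term $\varepsilon/(1+z^2)$ tends to $0$. Since $G$ is a nonconstant polynomial by hypothesis, $|G(z)|\to\infty$. Hence $R_{P,\varepsilon}(z)\to\infty$, i.e.\ $R_{P,\varepsilon}(\infty)=\infty$ on $\widehat\C$. In degree terms, the numerator has degree $\deg G+2\geq3$, which exceeds the denominator degree $2$. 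The hypothesis that $z+\varepsilon P(z)$ be nonconstant is exactly what is needed: if $G$ were a constant $c$, then $R_{P,\varepsilon}(\infty)=c$ would be finite.

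No step is a real obstacle. The argument is a direct verification, and the only subtlety, the non-cancellation of the poles at $\pm i$, is handled by evaluating the numerator there.
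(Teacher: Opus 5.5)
Your proposal is correct and follows the paper's proof essentially verbatim: you write $R_{P,\varepsilon}$ over the common denominator $1+z^2$ and observe that the numerator equals $\varepsilon\neq0$ at $\pm i$, so neither pole cancels. For anchoredness you use, as the paper does, that the nonconstant polynomial $z+\varepsilon P(z)$ tends to $\infty$ while $(1+z^2)^{-1}\to0$. Your residue and degree-count remarks are accurate but optional.
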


\begin{proof}
We have
\[
R_{P,\varepsilon}(z)
=
\frac{(z+\varepsilon P(z))(1+z^2)+\varepsilon}{1+z^2}.
\]
At $z=\pm i$, the numerator equals $\varepsilon\neq0$, so neither pole is
cancelled. Since $1+z^2$ has no other zeros, these are exactly the finite
poles of $R_{P,\varepsilon}$.

If $z+\varepsilon P(z)$ is nonconstant, then it is a nonconstant polynomial
and hence tends to $\infty$ on the Riemann sphere as $z\to\infty$, while
$(1+z^2)^{-1}\to0$. Therefore
\[
R_{P,\varepsilon}(\infty)=\infty,
\]
so $R_{P,\varepsilon}$ is anchored.
\end{proof}

We isolate the local-to-global arithmetic principle that will be used for iterates.

\begin{proposition}[Arithmetic germ principle]\label{prop:germ-principle}
Let $f\in C^\infty(\R)$ and let $A\subset\R$.  Assume:
\begin{enumerate}[label=\textup{(\alph*)}]
\item $f(A)\subseteq A$;
\item for every $\alpha\in A$, there is an open interval $I_\alpha\ni\alpha$ and an anchored rational function $R_\alpha\in\Q(z)$ such that
\[
 f|_{I_\alpha}=R_\alpha|_{I_\alpha}.
\]
\end{enumerate}
Then for every $\alpha\in A$ and every $n\geq1$, there are an open interval $V\ni\alpha$ and anchored rational functions $R_0,\dots,R_{n-1}\in\Q(z)$ such that
\[
 f^{\circ n}|_V=(R_{n-1}\circ\cdots\circ R_0)|_V.
\]
Consequently:
\begin{enumerate}[label=\textup{(\roman*)}]
\item if $A=K$ is a real number field, then
\[
 D^m(f^{\circ n})(K)\subseteq K
\qquad(m\geq0,\ n\geq1);
\]
\item if $A=\Lio$, then
\[
 D^m(f^{\circ n})(\Lio)\subseteq\Lio
\qquad(m\geq0,\ n\geq1).
\]
\end{enumerate}
\end{proposition}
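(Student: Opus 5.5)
The plan is to prove the local composition statement by induction along the orbit, and then read off (i) and (ii) from the anchored structure via Lemma~\ref{lem:anchors} and Theorem~\ref{thm:maillet}.

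Fix $\alpha\in A$ and set $\alpha_k:=f^{\circ k}(\alpha)$ for $k\geq0$. By (a), every $\alpha_k$ lies in $A$. By (b), there is an open interval $I_{\alpha_k}\ni\alpha_k$ and an anchored $R_k:=R_{\alpha_k}$ with $f=R_k$ on $I_{\alpha_k}$.

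We show by induction on $n$ that there is an open interval $V_n\ni\alpha$ such that $f^{\circ n}=R_{n-1}\circ\cdots\circ R_0$ on $V_n$.
\begin{itemize}
\item For $n=1$, take $V_1=I_{\alpha}$.
\item Given $V_n$, note that $f^{\circ n}$ is continuous and $f^{\circ n}(\alpha)=\alpha_n\in I_{\alpha_n}$. So
\[
V_{n+1}:=V_n\cap (f^{\circ n})^{-1}(I_{\alpha_n})
\]
contains an open interval around $\alpha$; shrink it to the connected component containing $\alpha$.
\item On $V_{n+1}$ we then have $f^{\circ(n+1)}=f\circ f^{\circ n}=R_n\circ(R_{n-1}\circ\cdots\circ R_0)$.
\end{itemize}
On $V$, the composition is a real-valued smooth function, so the poles of the intermediate rational maps are never hit. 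This is automatic, because the identity holds pointwise with real finite values.

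For the consequences, put $S:=R_{n-1}\circ\cdots\circ R_0\in\Q(z)$. By Lemma~\ref{lem:anchors}, $S$ is anchored, and each $S^{(m)}$ is a nonconstant element of $\Q(z)$ with a finite pole. Since $f^{\circ n}=S$ on the open interval $V\ni\alpha$, we get $D^m(f^{\circ n})(\alpha)=S^{(m)}(\alpha)$, and $\alpha$ is not a pole of $S^{(m)}$ because the left side is finite. Indeed, $S^{(m)}$ restricted to $V$ is the real derivative of the smooth function $S|_V$, and rational functions have poles only at isolated points.
\begin{itemize}
\item In case (i), $\alpha\in K$ and $S^{(m)}\in\Q(z)$ is defined at $\alpha$, so $S^{(m)}(\alpha)\in K$ because $K$ is a field containing $\Q$.
\item In case (ii), $S^{(m)}$ is nonconstant in $\Q(z)$ and $\alpha\in\Lio$, so Theorem~\ref{thm:maillet} gives $S^{(m)}(\alpha)\in\Lio$.
\end{itemize}

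The main point needing care is the identification of the real derivatives of $f^{\circ n}$ with the derivatives of the rational function $S$ at $\alpha$. This requires that $S$ genuinely agrees with $f^{\circ n}$ on a whole neighbourhood rather than just at the point, which is exactly what the open-interval induction provides. The other essential point is nonconstancy of $S^{(m)}$, which comes from the anchoring via Lemma~\ref{lem:anchors}; without it, Maillet's theorem would not apply.
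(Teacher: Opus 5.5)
Your proof is correct and follows essentially the same route as the paper's. The only difference is that the paper builds the neighbourhood in one step as $V=\bigcap_{j=0}^{n-1}(f^{\circ j})^{-1}(I_{\alpha_j})$, restricted to the component containing $\alpha$, where you build it inductively; it then deduces (i) and (ii) exactly as you do, from Lemma~\ref{lem:anchors}, the absence of a pole of $S^{(m)}$ at $\alpha$, and Theorem~\ref{thm:maillet}.
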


\begin{proof}
Fix $\alpha\in A$ and set
\[
 \alpha_j=f^{\circ j}(\alpha),
 \qquad 0\leq j\leq n-1.
\]
By (a), each $\alpha_j$ belongs to $A$.  Choose $I_j=I_{\alpha_j}$ and $R_j=R_{\alpha_j}$ as in (b).  The set
\[
 V=\bigcap_{j=0}^{n-1}(f^{\circ j})^{-1}(I_j)
\]
is open and contains $\alpha$. Replacing $V$ by the connected component
containing $\alpha$, we may assume that $V$ is an open interval. For $x\in V$, every intermediate point $f^{\circ j}(x)$ lies in $I_j$, so successive substitution gives
\[
 f^{\circ n}(x)=R_{n-1}\circ\cdots\circ R_0(x).
\]

Let $S=R_{n-1}\circ\cdots\circ R_0$.  By Lemma~\ref{lem:anchors}, every $S^{(m)}$ is a nonconstant rational function in $\Q(z)$. Since $S$ agrees with the smooth iterate $f^{\circ n}$ on a neighborhood of $\alpha$, neither $S$ nor any $S^{(m)}$ has a pole at $\alpha$. If $\alpha\in K$, then
\[
 D^m(f^{\circ n})(\alpha)=S^{(m)}(\alpha)\in K,
\]
because $S^{(m)}$ has rational coefficients. This proves (i). If $\alpha\in\Lio$, the preceding no-pole observation allows us to apply Theorem~\ref{thm:maillet}, which gives
\[
 D^m(f^{\circ n})(\alpha)=S^{(m)}(\alpha)\in\Lio,
\]
which proves (ii).
\end{proof}

\section{Construction of the diffeomorphism}\label{sec:construction}

We now combine the arithmetic sewing construction with the rational
pole mechanism of Section~\ref{sec:germs} to prove the structural form
of the main result.

\begin{theorem}[Prescribed-singularity theorem]\label{thm:cloaking}
Let $E\subset\R$ be a nonempty compact perfect nowhere-dense set such that
\[
E\cap(\Ralg\cup\Lio)=\varnothing.
\]
For every neighborhood $\mathcal U$ of the identity in the compact-open $C^\infty$ topology, there exists an orientation-preserving $C^\infty$ diffeomorphism $f:\R\to\R$ with the following properties:
\begin{enumerate}[label=\textup{(\roman*)}]
\item $f\in\mathcal U$ and $\NA(f)=E$;
\item for every connected component $I$ of $\R\setminus E$, there exists $R_I\in\Q(z)$ such that $f|_I=R_I|_I$, the finite poles of $R_I$ are exactly $i$ and $-i$, and $R_I(\infty)=\infty$;
\item $f$ is transcendental over $\R(x)$;
\item for every real number field $K\subset\R$, every $n\geq1$, and every $m\geq0$,
\[
D^m(f^{\circ n})(K)\subseteq K;
\]
\item for every $n\geq1$ and every $m\geq0$,
\[
D^m(f^{\circ n})(\Lio)\subseteq\Lio.
\]
\end{enumerate}
Moreover, one may choose a sequence of such diffeomorphisms converging to the identity in $C^\infty(\R)$.
\end{theorem}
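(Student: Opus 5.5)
The plan is to take $f(x)=x+\varepsilon\bigl(g(x)+\frac{1}{1+x^2}\bigr)$, where $g\in C_c^\infty(\R)$ is given by Theorem~\ref{thm:sewing} with $F=\Q$ and $\NA(g)=E$, and $\varepsilon\in\Q\setminus\{0\}$ is small. On each component $I$ of $\R\setminus E$ we have $g|_I=P_I$ with $P_I\in\Q[x]$. Hence $f|_I=R_{P_I,\varepsilon}|_I$, with $R_{P_I,\varepsilon}$ as in Lemma~\ref{lem:model}, so the finite poles of $R_I$ are exactly $\pm i$.

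\textbf{Smallness and the diffeomorphism property.} The perturbation $h:=g+(1+x^2)^{-1}$ satisfies $\|h'\|_\infty<\infty$, because $g$ has compact support and $(1+x^2)^{-1}$ has bounded derivative. Choosing $|\varepsilon|\,\|h'\|_\infty<1/2$ gives $f'\geq1/2$. Since $f(x)-x$ is bounded, $f$ is a proper increasing $C^\infty$ map, hence an orientation-preserving diffeomorphism of $\R$. Each seminorm $p_{A,r}(f-\id)=|\varepsilon|\,p_{A,r}(h)$ tends to $0$ as $\varepsilon\to0$. Thus $f\in\mathcal U$ for small rational $\varepsilon$, and $\varepsilon=1/k$ yields a sequence converging to the identity.

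\textbf{Anchoredness.} To make $R_I$ anchored I need $z+\varepsilon P_I(z)$ to be nonconstant. On the unbounded components $P_I=0$, so this is automatic. On the bounded gaps $\deg P_I=\deg Q_j$, and in the proof of Theorem~\ref{thm:sewing} these degrees exceed $j\geq1$; I will either re-read that proof or simply note that $\deg Q_j\geq2$ can be forced via Lemma~\ref{lem:dense-field}. Then $x+\varepsilon P_I$ has degree at least $2$ and is nonconstant, so Lemma~\ref{lem:model} gives $R_I(\infty)=\infty$ and (ii). For non-analyticity, note that $x+\varepsilon(1+x^2)^{-1}$ is analytic on $\R$, so $\NA(f)=\NA(g)=E$. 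Transcendence (iii) follows from Theorem~\ref{thm:smooth-algebraic}: if $f$ were algebraic over $\R(x)$ on $\R$, it would be real-analytic on $\R$, contradicting $\NA(f)=E\neq\varnothing$.

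\textbf{Arithmetic invariance.} I apply Proposition~\ref{prop:germ-principle} with $A=K$ and with $A=\Lio$. Hypothesis (b) holds because $E\cap(\Ralg\cup\Lio)=\varnothing$: every $\alpha\in A$ lies in some component $I$, and $R_I$ is anchored. For hypothesis (a), any $\alpha\in K$ lies off $E$, so $f(\alpha)=R_I(\alpha)\in K$ since $R_I\in\Q(z)$. If $\alpha\in\Lio$, then $f(\alpha)=R_I(\alpha)\in\Lio$ by Theorem~\ref{thm:maillet}, since $R_I$ is nonconstant and $\alpha$ is not a pole. This gives (iv) and (v).

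The only delicate step is guaranteeing nonconstancy of every local model $x+\varepsilon P_I$, together with checking that the rational $\varepsilon$ can be chosen uniformly for all components. The latter is fine because the bound on $\varepsilon$ depends only on $\|h'\|_\infty$.
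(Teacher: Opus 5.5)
Your proposal is correct and follows the paper's proof essentially step for step: the same $f_\varepsilon$, the same diffeomorphism and approximation argument, $\NA(f)=E$ and transcendence via Theorem~\ref{thm:smooth-algebraic}, and (iv)--(v) via Proposition~\ref{prop:germ-principle}. The one divergence is how you show $z+\varepsilon P_I(z)$ is nonconstant. You use $\deg Q_j>j\geq1$, which is a feature of the construction inside the proof of Theorem~\ref{thm:sewing} rather than of its statement. The paper instead imposes the extra smallness condition $|\varepsilon|\,\|g'\|_{L^\infty(\R)}<1/2$ and observes that a constant $z+\varepsilon P_I$ would force $|\varepsilon|\,|g'|=1$ on $I$; that argument works for any $g$ satisfying the theorem as stated, whereas yours needs no extra condition on $\varepsilon$.
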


Fix $E$ as in Theorem~\ref{thm:cloaking}. Apply Theorem~\ref{thm:sewing} with $F=\Q$. We obtain $g\in C_c^\infty(\R)$ such that $\NA(g)=E$ and, for every component $I$ of $\R\setminus E$,
\begin{equation}\label{eq:g-piece}
g|_I=P_I|_I
\qquad\text{for some }P_I\in\Q[x].
\end{equation}
Let
\[
h(x)=\frac1{1+x^2}.
\]
The role of this rational perturbation is structural: its two nonreal poles $\pm i$ are inherited by every local rational model and provide a persistent certificate of nonconstancy under arbitrary compositions and differentiations. For $\varepsilon\in\Q\setminus\{0\}$, define
\begin{equation}\label{eq:f-eps}
f_\varepsilon(x)=x+\varepsilon(g(x)+h(x)).
\end{equation}

\begin{lemma}[Diffeomorphism and approximation]\label{lem:diffeomorphism}
For every sufficiently small nonzero rational $\varepsilon$, the map $f_\varepsilon$ is an orientation-preserving $C^\infty$ diffeomorphism of $\R$ onto itself. Moreover,
\[
f_\varepsilon\longrightarrow\id
\qquad\text{in }C^\infty(\R)
\]
as $\varepsilon\to0$.
\end{lemma}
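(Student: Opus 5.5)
The plan is to control $g+h$ uniformly in every $C^r$ norm on all of $\R$, and then read off both assertions: positivity of the derivative for small $\varepsilon$, and convergence in every seminorm $p_{A,r}$.

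First, $g\in C_c^\infty(\R)$, so each derivative $g^{(j)}$ is bounded on $\R$. For $h(x)=(1+x^2)^{-1}$, every derivative $h^{(j)}$ is a rational function with denominator $(1+x^2)^{j+1}$ and numerator of degree at most $j$. Hence each $h^{(j)}$ is bounded on $\R$ and tends to $0$ at $\pm\infty$. Set
\[
M_r:=\max_{0\le j\le r}\sup_{x\in\R}\bigl|(g+h)^{(j)}(x)\bigr|<\infty .
\]
In particular $M_1$ bounds $|g'+h'|$ on $\R$.

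Next, $f_\varepsilon'(x)=1+\varepsilon(g'(x)+h'(x))\ge 1-|\varepsilon|M_1$. For $0<|\varepsilon|<1/(2M_1)$ (any $\varepsilon$ if $M_1=0$, which does not occur here) we get $f_\varepsilon'\ge 1/2$ everywhere. Thus $f_\varepsilon$ is $C^\infty$ and strictly increasing. Since $|f_\varepsilon(x)-x|\le|\varepsilon|M_0$, we have $f_\varepsilon(x)\to\pm\infty$ as $x\to\pm\infty$, so by the intermediate value theorem $f_\varepsilon$ is a bijection of $\R$. Its inverse is $C^\infty$ by the inverse function theorem, because $f_\varepsilon'$ never vanishes. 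Orientation preservation is just $f_\varepsilon'>0$. All of this works for rational $\varepsilon$ in the stated range, and such rationals exist arbitrarily close to $0$.

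Finally, for the convergence, note that $f_\varepsilon-\id=\varepsilon(g+h)$. Hence $p_{A,r}(f_\varepsilon-\id)\le|\varepsilon|M_r\to0$ for every $A,r$, which is convergence in the Fréchet topology. There is no real obstacle here. The only point needing care is the global (not merely local) boundedness of $h'$, since properness and surjectivity require uniform control on the whole line, and the explicit rational form of $h^{(j)}$ supplies this.
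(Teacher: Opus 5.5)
Your proof is correct and follows the same route as the paper: boundedness of $g'+h'$ gives $f_\varepsilon'\geq 1/2$, monotonicity plus $f_\varepsilon(x)\to\pm\infty$ gives bijectivity, the inverse function theorem gives a smooth inverse, and $f_\varepsilon-\id=\varepsilon(g+h)$ gives convergence. The only cosmetic differences are that you use the global bound $|f_\varepsilon(x)-x|\leq|\varepsilon|M_0$ where the paper uses $f_\varepsilon(x)-x\to0$, and global sup-norms $M_r$ where the paper uses the seminorm $p_{A,r}(g+h)$ directly, which is already finite by compactness.
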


\begin{proof}
Since $g$ is compactly supported and $h'$ is bounded,
\[
M:=\norm{g'+h'}_{L^\infty(\R)}<\infty.
\]
If $|\varepsilon|M<1/2$, then
\[
f_\varepsilon'(x)
=1+\varepsilon(g'(x)+h'(x))
\geq1-|\varepsilon|M>\frac12
\]
for every $x\in\R$. Hence $f_\varepsilon$ is strictly increasing and is a local $C^\infty$ diffeomorphism at every point. Since $g(x)=0$ for $|x|$ large and $h(x)\to0$ as $|x|\to\infty$,
\[
f_\varepsilon(x)-x\longrightarrow0
\qquad(x\to\pm\infty).
\]
Thus $f_\varepsilon(x)\to\pm\infty$ as $x\to\pm\infty$. Strict monotonicity now implies that $f_\varepsilon$ maps $\R$ bijectively onto $\R$, and the inverse function theorem shows that its inverse is $C^\infty$.

Finally,
\[
f_\varepsilon-\id=\varepsilon(g+h),
\]
so, for every compact-open seminorm $p_{A,r}$,
\[
p_{A,r}(f_\varepsilon-\id)
=|\varepsilon|p_{A,r}(g+h)\longrightarrow0.
\]
\end{proof}

We shall also impose
\begin{equation}\label{eq:epsilon-small}
|\varepsilon|\norm{g'}_{L^\infty(\R)}<\frac12.
\end{equation}
This ensures that the polynomial $z+\varepsilon P_I(z)$ occurring in
each local model remains nonconstant.

\begin{lemma}[Local rationality and the exact analytic locus]\label{lem:local-rationality}
Assume that $\varepsilon\in\Q\setminus\{0\}$ is small enough for Lemma~\ref{lem:diffeomorphism} and \eqref{eq:epsilon-small}, and put $f=f_\varepsilon$. For every component $I$ of $\R\setminus E$,
\[
f|_I=R_I|_I,
\qquad
R_I(z)=z+\varepsilon\left(P_I(z)+\frac1{1+z^2}\right)\in\Q(z).
\]
Each $R_I$ is anchored and has precisely the finite poles $i$ and $-i$. Furthermore,
\[
\NA(f)=E.
\]
\end{lemma}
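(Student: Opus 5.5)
The plan is to verify the local formula directly, then read off anchoredness from Lemma~\ref{lem:model}, and finally obtain the analytic locus by reducing to $\NA(g)=E$.

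\emph{Local formula.} Fix a component $I$ of $\R\setminus E$. By \eqref{eq:g-piece}, $g|_I=P_I|_I$ with $P_I\in\Q[x]$. Since $h(x)=1/(1+x^2)$, substituting into \eqref{eq:f-eps} gives $f|_I=R_I|_I$ with $R_I=R_{P_I,\varepsilon}$ in the notation of Lemma~\ref{lem:model}. That lemma then shows $R_I\in\Q(z)$ and that its finite poles are exactly $\pm i$.

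\emph{Anchoredness.} By the same lemma, it suffices to check that $z+\varepsilon P_I(z)$ is nonconstant. If it were constant, its derivative $1+\varepsilon P_I'$ would vanish identically. Restricting to the nonempty open interval $I$, where $P_I'=g'$, we would get $1+\varepsilon g'(x)=0$ for $x\in I$. This forces $|\varepsilon|\,|g'(x)|=1$, contradicting \eqref{eq:epsilon-small}. So the polynomial is nonconstant and $R_I$ is anchored.

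\emph{Analytic locus.} Write
\[
f=\id+\varepsilon h+\varepsilon g,
\]
where $\id+\varepsilon h$ is real-analytic on all of $\R$ (its poles $\pm i$ are nonreal). For any open interval $V$, $f$ is real-analytic on $V$ if and only if $\varepsilon g=f-(\id+\varepsilon h)$ is. Since $\varepsilon\neq0$, this holds if and only if $g$ is real-analytic on $V$. Hence $\NA(f)=\NA(g)=E$ by Theorem~\ref{thm:sewing}.

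There is no real obstacle here. The only point needing care is nonconstancy of $z+\varepsilon P_I$, which relies on passing from the polynomial identity to the open set $I$ so that \eqref{eq:epsilon-small} can be applied. This also covers the unbounded components, where $P_I=0$ and $z+\varepsilon P_I=z$ is trivially nonconstant.
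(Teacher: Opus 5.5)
Your proposal is correct and follows essentially the same route as the paper: you obtain the local formula from \eqref{eq:g-piece} and \eqref{eq:f-eps}, derive anchoredness from Lemma~\ref{lem:model} using the same derivative argument against \eqref{eq:epsilon-small}, and get $\NA(f)=\NA(g)=E$ because $f$ and $g$ differ only by the real-analytic function $\id+\varepsilon h$ and the nonzero factor $\varepsilon$. Your two-sided equivalence on an open interval $V$ simply packages the paper's two inclusions (analyticity off $E$, non-analyticity on $E$) into a single step.
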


\begin{proof}
The local identity follows from \eqref{eq:g-piece} and \eqref{eq:f-eps}. By Lemma~\ref{lem:model}, the only finite poles are $i$ and $-i$. It remains to verify that
\[
A_I(z):=z+\varepsilon P_I(z)
\]
is nonconstant. Otherwise, $1+\varepsilon P_I'\equiv0$, and for every $x\in I$ we would have
\[
1=|\varepsilon|\,|P_I'(x)|
=|\varepsilon|\,|g'(x)|
\leq |\varepsilon|\norm{g'}_{L^\infty(\R)}<\frac12,
\]
a contradiction. Hence $R_I(\infty)=\infty$, so $R_I$ is anchored.

On $\R\setminus E$, the function $f$ is locally rational without real poles and is therefore real-analytic. If $x\in E$ and $f$ were real-analytic in a neighborhood of $x$, then
\[
g=\varepsilon^{-1}(f-\id)-h
\]
would be real-analytic there, contrary to $\NA(g)=E$. Thus $\NA(f)=E$.
\end{proof}

The prescribed non-analyticity locus now forces global transcendence.

\begin{proposition}[Global transcendence]\label{prop:transcendence}
The function $f$ is transcendental over $\R(x)$.
\end{proposition}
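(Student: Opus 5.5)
The plan is to argue by contradiction, combining Theorem~\ref{thm:smooth-algebraic} with the exact non-analyticity locus from Lemma~\ref{lem:local-rationality}. Suppose $f$ were algebraic over $\R(x)$. Then there is a nonzero $P\in\R[X,Y]$ with $P(x,f(x))=0$ for every $x\in\R$. Since $f$ is $C^\infty$ on all of $\R$ by Lemma~\ref{lem:diffeomorphism}, we may apply Theorem~\ref{thm:smooth-algebraic} with $J=\R$, or with any open interval $J$ containing a point of $E$. This shows that $f$ is real-analytic on $J$.

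The set $E$ is nonempty, so pick $x_0\in E$ and an open interval $J\ni x_0$. The restriction of $P$ to $J$ still gives an algebraic relation $P(x,f(x))=0$ on $J$, with the same nonzero polynomial, so Theorem~\ref{thm:smooth-algebraic} applies there. It follows that $f$ is real-analytic at $x_0$, which means $x_0\notin\NA(f)$. This contradicts $\NA(f)=E$ from Lemma~\ref{lem:local-rationality}. Hence $f$ is transcendental over $\R(x)$.

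There is essentially one point needing care. The definition of ``algebraic over $\R(x)$'' refers to a domain $J$, and we must make sure the hypothesis of Theorem~\ref{thm:smooth-algebraic} is met: $J$ has to be an open interval and $u=f|_J$ has to be smooth. Both hold once we take $J=\R$, or any subinterval of it, since an algebraic relation on a larger interval restricts to one on a smaller interval. No other obstacle is expected, because the substantive work (that $g$ is nowhere analytic on $E$) was already done in Theorem~\ref{thm:sewing} and transferred to $f$ in Lemma~\ref{lem:local-rationality}.
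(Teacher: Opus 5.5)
Your proof is correct and is essentially the paper's argument: assume an algebraic relation $P(x,f(x))=0$ on $\R$, apply Theorem~\ref{thm:smooth-algebraic} to conclude that $f$ is real-analytic, and contradict $\NA(f)=E\neq\varnothing$ from Lemma~\ref{lem:local-rationality}. Your extra step of restricting to a subinterval around a point of $E$ is harmless but not needed, since $J=\R$ already qualifies as an open interval.
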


\begin{proof}
If $f$ were algebraic over $\R(x)$, then, after clearing denominators, there would exist a nonzero polynomial $P\in\R[X,Y]$ such that $P(x,f(x))=0$ for every $x\in\R$. Theorem~\ref{thm:smooth-algebraic} would make $f$ real-analytic on $\R$, contradicting $\NA(f)=E\neq\varnothing$.
\end{proof}

We next record the arithmetic consequence of the local rational models before passing to iterates.

\begin{proposition}[One-step arithmetic invariance]\label{prop:one-step}
For every real number field $K\subset\R$ and every $m\geq0$,
\[
f^{(m)}(K)\subseteq K,
\qquad
f^{(m)}(\Lio)\subseteq\Lio.
\]
In particular, $f(K)\subseteq K$ and $f(\Lio)\subseteq\Lio$.
\end{proposition}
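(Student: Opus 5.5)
The argument is pointwise and local: every point we must control lies off $E$, and near such a point $f$ coincides with an explicit anchored rational function over $\Q$. Fix $\alpha\in K$ (respectively $\alpha\in\Lio$). By hypothesis $E\cap(\Ralg\cup\Lio)=\varnothing$, and $K\subset\Ralg$, so $\alpha\notin E$. Hence $\alpha$ lies in a unique component $I$ of $\R\setminus E$. Since $I$ is open, Lemma~\ref{lem:local-rationality} gives $f|_I=R_I|_I$ with
\[
R_I(z)=z+\varepsilon\Bigl(P_I(z)+\frac1{1+z^2}\Bigr)\in\Q(z),
\]
and $R_I$ is anchored with finite poles exactly $\pm i$. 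Differentiating $m$ times on the open interval $I$ gives $f^{(m)}(\alpha)=R_I^{(m)}(\alpha)$. Note that $R_I^{(m)}\in\Q(z)$ and that its only possible poles are $\pm i$, neither of which is real. So the evaluation at $\alpha$ is legitimate.

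For the field statement, write $R_I^{(m)}=A/B$ with $A,B\in\Q[z]$ and $B(\alpha)\neq0$, since $B$ is a power of $1+z^2$. Then $A(\alpha),B(\alpha)\in K$ because $\alpha\in K$ and the coefficients are rational, so $f^{(m)}(\alpha)=A(\alpha)/B(\alpha)\in K$.

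For the Liouville statement, the only point needing care is that Theorem~\ref{thm:maillet} applies only to \emph{nonconstant} rational functions. This is where the anchoring enters. By Lemma~\ref{lem:anchors} applied with $n=1$ and $R_1=R_I$, the derivative $R_I^{(m)}$ still has a finite pole (at $\pm i$) and is therefore nonconstant for every $m\geq0$. Theorem~\ref{thm:maillet} then gives $f^{(m)}(\alpha)=R_I^{(m)}(\alpha)\in\Lio$. The case $m=0$ gives $f(K)\subseteq K$ and $f(\Lio)\subseteq\Lio$.

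The main obstacle is the nonconstancy of $R_I^{(m)}$; everything else is formal. A polynomial local model $z+\varepsilon P_I$ could have a constant derivative, for instance when $m\geq\deg$, and then Maillet's theorem would fail to apply. The nonreal poles introduced by $h$, which persist under differentiation, remove this issue uniformly in $m$ and $I$. Alternatively, one could observe that this proposition is the case $n=1$ of Proposition~\ref{prop:germ-principle}. Its hypothesis (b) holds with $I_\alpha=I$ and $R_\alpha=R_I$, and hypothesis (a) is exactly what the argument above establishes for $m=0$.
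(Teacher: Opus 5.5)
Your proof is correct and follows the paper's argument essentially step for step. You place $\alpha$ off $E$, replace $f$ near $\alpha$ by the local model $R_I\in\Q(z)$, and get field invariance from the rational coefficients and the absence of real poles; you then use Lemma~\ref{lem:anchors} to ensure $R_I^{(m)}$ is nonconstant before applying Theorem~\ref{thm:maillet}. Your explicit denominator (a power of $1+z^2$) and your remark that this is the case $n=1$ of Proposition~\ref{prop:germ-principle} are harmless additions.
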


\begin{proof}
Let $\alpha\in K$. Since $E\cap\Ralg=\varnothing$, we have $\alpha\notin E$, so
$\alpha$ belongs to a component $I$ of $\R\setminus E$. On a neighborhood of
$\alpha$, the map $f$ agrees with $R_I\in\Q(z)$. Therefore
\[
f^{(m)}(\alpha)=R_I^{(m)}(\alpha)\in K,
\]
since $R_I^{(m)}\in\Q(z)$ and differentiation introduces no poles away
from those of $R_I$.

Now let $\xi\in\Lio$. Since $\xi\notin E$, the map $f$ agrees near $\xi$ with
an anchored rational function $R_I\in\Q(z)$. By Lemma~\ref{lem:anchors},
$R_I^{(m)}$ is nonconstant and has a finite pole, while it has no pole at
the real point $\xi$. Hence Maillet's theorem applies and gives
\[
f^{(m)}(\xi)=R_I^{(m)}(\xi)\in\Lio.
\]
\end{proof}

\begin{proof}[Proof of Theorem~\ref{thm:cloaking}]
Fix a neighborhood $\mathcal U$ of $\id$. Lemma~\ref{lem:diffeomorphism} gives $f_\varepsilon\to\id$ in $C^\infty(\R)$ as $\varepsilon\to0$. Choose a nonzero rational $\varepsilon$ so small that
\[
f_\varepsilon\in\mathcal U,
\qquad
|\varepsilon|\norm{g'+h'}_{L^\infty(\R)}<\frac12,
\qquad
|\varepsilon|\norm{g'}_{L^\infty(\R)}<\frac12.
\]
Put $f=f_\varepsilon$.

Lemma~\ref{lem:diffeomorphism} gives an orientation-preserving diffeomorphism. Lemma~\ref{lem:local-rationality} gives the exact non-analyticity locus and the anchored local models with poles $i$ and $-i$. Proposition~\ref{prop:transcendence} gives global transcendence.

By Proposition~\ref{prop:one-step}, every real number field $K$ and the set
$\Lio$ are invariant under $f$, while Lemma~\ref{lem:local-rationality}
provides the required anchored rational germs at each point of these sets.
Therefore Proposition~\ref{prop:germ-principle} applies to both $K$ and
$\Lio$, and yields
\[
D^m(f^{\circ n})(K)\subseteq K,
\qquad
D^m(f^{\circ n})(\Lio)\subseteq\Lio
\]
for every $n\geq1$ and every $m\geq0$.

Finally, choosing nonzero rationals $\varepsilon_\nu\to0$ satisfying the same two smallness inequalities produces a sequence of maps with all the asserted properties converging to the identity in $C^\infty(\R)$.
\end{proof}

\begin{proof}[Proof of Theorem~\ref{thm:headline}]
Choose, for instance, $\eta=1/2$ in Proposition~\ref{prop:invisible}, and let
$E\subset(0,1)$ be the resulting nonempty compact perfect nowhere-dense set.
Then
\[
E\cap(\Ralg\cup\Lio)=\varnothing.
\]
Applying Theorem~\ref{thm:cloaking} to this $E$ yields an
orientation-preserving $C^\infty$ diffeomorphism, arbitrarily close to the
identity and transcendental over $\R(x)$, for which
\[
D^m(f^{\circ n})(K)\subseteq K,
\qquad
D^m(f^{\circ n})(\Lio)\subseteq\Lio
\]
for every real number field $K\subset\R$, every $n\geq1$, and every $m\geq0$.
This is precisely the conclusion of Theorem~\ref{thm:headline}.
\end{proof}

\section{Concluding remarks}\label{sec:concluding}

The construction rests on a separation of roles: smoothness and prescribed
non-analyticity are handled by the sewing mechanism, while arithmetic
preservation is ultimately local, through exact rational models and
Maillet's theorem. In this way, global height control is replaced by exact
local rationality.

\medskip

\noindent
\emph{Geometry of the non-analyticity locus.}
The prescribed-singularity form of the main theorem leaves considerable
freedom in the geometry of the exceptional set. In particular,
Proposition~\ref{prop:invisible} and Theorem~\ref{thm:cloaking} show that,
for every $0<\eta<1$, the diffeomorphism may be chosen so that
\[
\lambda\bigl(\NA(f)\cap(0,1)\bigr)>1-\eta.
\]
Thus non-analyticity may occupy almost all of a fixed interval in measure,
while remaining completely disjoint from both the real algebraic and
Liouville numbers. More generally, the construction applies to any nonempty
compact perfect nowhere-dense set $E$ avoiding these two arithmetic sets.

\medskip

\noindent
\emph{The analytic barrier.}
The mechanism is genuinely specific to the smooth category. Its flexibility
comes from allowing distinct rational germs on different components of
$\R\setminus E$, with the failure of analyticity confined to $E$. In the
real-analytic, and a fortiori in the entire, category, the identity principle
prevents such independent local models from being sewn together.
Consequently, the present method does not yield a route to Mahler's original
question for transcendental entire functions; overcoming this analytic
rigidity appears to require a substantially different mechanism.

\medskip

\noindent
\emph{Beyond Liouville numbers.}
The rational-germ principle of Proposition~\ref{prop:germ-principle} is not
specific to $\Lio$. More generally, let $A\subset\R$ be disjoint from the
prescribed non-analyticity locus and stable under nonconstant rational
functions in $\Q(z)$ wherever they are defined. The same local-rationality
mechanism then yields preservation of $A$ under the corresponding iterates
and derivatives. This suggests that arithmetic sewing may be useful for
other rationally invariant Diophantine classes, whenever a suitable
geometric separation from the non-analyticity locus is available.

\section*{Funding}

This work was supported by the National Council for Scientific and Technological Development (CNPq), under
Grant No. 304467/2023-5.

\end{document}